\newcommand{\T}{\Theta}
\newcommand{\Pt}{\mathds{P}_{\Theta}}
\newcommand{\E}{\mathds{E}}
\renewcommand{\P}{\mathds{P}}
\newcommand{\bX}{{\bf X}}
\newcommand{\bx}{{\bf x}}
\newcommand{\bz}{{\bf z}}
\newtheorem{remark}{Remark}
\newtheorem{theorem}{Theorem}[section]
\newtheorem{lemme}{Lemma}
\newtheorem{techlemme}{Technical Lemma}
\newtheorem{corollary}{Corollary}
\newtheorem*{stone}{Stone's theorem \citeyearpar{St77}}
\newtheoremstyle{break}  
  {\topsep}   
  {\topsep}   
  {\itshape}  
  {0pt}       
  {\bfseries} 
  {}         
  {5pt plus 1pt minus 1pt}  
  {}          
\theoremstyle{break}
\newtheorem*{assumption}{}
\begin{document}

\begin{center}
{\Large 
\textbf{\textsf{On the asymptotics of random forests}}}
\medskip
\medskip
\end{center}

\noindent{\bf Erwan Scornet }\\
{\it Sorbonne Universit\'es, UPMC Univ Paris 06, F-75005, Paris, France}\\
\href{mailto:erwan.scornet@upmc.fr}{erwan.scornet@upmc.fr}\

\medskip
\begin{abstract}
\noindent {\rm 
The last decade has witnessed a growing interest in random forest models which are recognized to exhibit good practical performance, especially in high-dimensional settings. On the theoretical side, however, their predictive power remains largely unexplained, thereby creating a gap between theory and practice. 
The aim of this paper is twofold. Firstly, we provide theoretical guarantees to link finite forests used in practice (with a finite number $M$ of trees) to their asymptotic counterparts (with $M = \infty$). Using empirical process theory, we prove a uniform central limit theorem for a large class of random forest estimates, which holds in particular for Breiman's original forests. 
Secondly, we show that infinite forest consistency implies finite forest consistency and thus, we state the consistency of several infinite forests. In particular, we prove that $q$ quantile forests---close in spirit to Breiman's forests but easier to study---are able to combine inconsistent trees to obtain a final consistent prediction, thus highlighting the benefits of random forests compared to single trees.

\medskip

\noindent \emph{Index Terms} --- Random forests, randomization, consistency, central limit theorem, empirical process,  number of trees, $q$-quantile.

\medskip

\noindent \emph{2010 Mathematics Subject Classification}: 62G05, 62G20.}
\end{abstract}

\section{Introduction}

Random forests are a class of algorithms used to solve classification and regression problems. As ensemble methods, they grow several trees as base estimates and aggregate them to make a prediction. In order to obtain many different trees based on a single training set, random forests procedures introduce randomness in the tree construction. For instance, trees can be built by randomizing the set of features \citep[][]{DiKo95, Ho98}, the data set \citep[][]{Br96,Br00}, or both at the same time \citep[][]{Br01,CuZh01}.

Among all random forest algorithms, the most popular one is that of Breiman \citeyearpar{Br01}, which relies on CART procedure \citep[Classification and Regression Trees,][]{BrFrOlSt84} to grow the individual trees.  As highlighted by many applied studies \citep[see, e.g.,][]{HaLa05, DiAl06}, Breiman's \citeyearpar{Br01} random forests often outperform state-of-the-art methods. They are recognized for their ability to handle high-dimensional data sets, thus being useful in fields such as genomics \citep[][]{Qi12} and pattern recognition \citep[][]{RoRiRaOrTo08}, just to name a few. On the computational side, Breiman's \citeyearpar{Br01} forests are easy to run and robust to changes in the parameters they depend on \citep[][]{LiWi02, GePoTu08}. As a proof of their success, many extensions have been developed in ranking problems \citep[][]{ClDeVa13}, quantile estimation \citep[][]{Me06}, and survival analysis \citep[][]{IsKoBlLa08}. Interesting new developments in the context of massive data sets have been achieved. For instance, \citet{GeErWe06} modified the procedure to reduce calculation time, while other authors extended the procedure to online settings \citep[][and the reference therein]{DeMaFr13, LaRoTe14}.

While Breiman's \citeyearpar{Br01} forests are extensively used in practice, some of their mathematical properties remain under active investigation. 
In fact, most theoretical studies focus on simplified versions of the algorithm, where the forest construction is independent of the training set. Consistency of such simplified models has been proved \citep[e.g.,][]{BiDeLu08,IsKo10, DeMaFr13}. However, these results do not extend  to Breiman's original forests whose construction critically depends on the whole training set. Recent attempts to bridge the gap between theoretical forest models and Breiman's \citeyearpar{Br01} forests have been made by \citet{Wa14} and \citet{ScBiVe14} who establish consistency of the original algorithm under suitable assumptions.

Apart from the dependence of the forest construction on the data set, there is another fundamental difference between existing forest models and ones implemented. Indeed, in practice, a forest can only be grown with a finite number $M$ of trees although most theoretical works assume, by convenience, that $M=\infty$. Since the predictor with $M=\infty$ does not depend on the specific tree realizations that form the forest, it is therefore more amenable to analysis. However, surprisingly, no study aims at clarifying the link between finite forests (finite $M$) and infinite forests ($M=\infty$) even if some authors  \citep[][]{MeHo14a,WaHaEf14} proved results on finite forest predictions at a fixed point $\bx$.

In the present paper, our goal is to study the connection between infinite forest models and finite forests used in practice in the context of regression. We start by proving a uniform central limit theorem for various random forests estimates, including Breiman's \citeyearpar{Br01} ones. In Section $3$, we also point out that the $\mathds{L}^2$ risk of infinite forests is lower than that of finite forests, which supports the interest of theoretical studies for infinite forests. Besides, this result shows that infinite forest consistency implies finite forest consistency.
Finally, in Section $4$, we prove the consistency of several infinite random forests. In particular, taking one step toward the understanding of Breiman's \citeyearpar{Br01} forests, we prove that $q$ quantile forests, a variety of forests whose construction depends on the positions $\bX_i$'s of the data, are consistent. As for Breiman's forests, each leaf of each tree in $q$ quantile forests contains a small number of points that does not grow to infinity with the sample size. Thus, $q$ quantile forests average inconsistent trees estimate to build a consistent prediction.

We start by giving some notation in Section $2$. All proofs are postponed to Section $5$.

\section{Notation}

Throughout the paper, we assume to be given a training sample $\mathcal D_n=(\bX_1,Y_1),$ $ \hdots, (\bX_n,Y_n)$ of $[0,1]^d\times$ $  \mathbb R$-valued independent random variables distributed as the prototype pair $(\bX,$ $ Y)$, where $\mathds{E}[Y^2]<\infty$. We aim at predicting the response $Y$, associated with the random variable $\bX$, by estimating the regression function $m(\bx) = \E \left[ Y | \bX = \bx\right]$. In this context, we use random forests to build an estimate $m_n: [0,1]^d \to \mathds{R}$ of $m$, based on the data set $\mathcal{D}_n$.

A random forest is a collection of $M$ randomized regression trees \citep[for an overview on tree construction, see Chapter $20$ in][]{GyKoKrWa02}. For the $j$-th tree in the family, the predicted value at point $\bx$ is denoted by $m_n(\bx, \Theta_j,\mathcal D_n)$, where $\Theta_1, \hdots,\Theta_M$ are independent random variables, distributed as a generic random variable $\Theta$, independent of the sample $\mathcal D_n$. 
This random variable can be used to sample the training set or to select the candidate directions or positions for splitting. The trees are combined to form the finite forest estimate 
\begin{align}
m_{M,n}({\bf x}, \Theta_1, \hdots, \Theta_M) = \frac{1}{M} \sum_{m=1}^M m_n(\bx, \Theta_m). \label{finite_forest}
\end{align}
By the law of large numbers, for any fixed $\bx$, conditionally on $\mathcal{D}_n$, the finite forest estimate tends to the infinite forest estimate 
\begin{align*}
m_{\infty,n}(\bx) = \E_{\Theta} \left[m_n(\bx, \Theta)\right].
\end{align*}
The risk of $m_{\infty,n}$ is defined by 
\begin{align}
R(m_{\infty,n}) = \mathds E [m_{\infty,n}(\bX)-m(\bX)]^2, \label{def_risk_infinite_RF}
\end{align}
while the risk of $m_{M,n}$ equals
\begin{align}
R(m_{M,n}) = \mathds E [m_{M,n}(\bX, \Theta_1, \hdots, \Theta_M)-m(\bX)]^2. \label{def_risk_finite_RF}
\end{align}
It is stressed that both risks $R(m_{\infty,n})$ and $R(m_{M,n})$ are deterministic since the expectation in (\ref{def_risk_infinite_RF})  is over $\bX, \mathcal{D}_n$, and the expectation in 
(\ref{def_risk_finite_RF}) is over $\bX, \mathcal{D}_n$ and $\Theta_1, \hdots, \Theta_M$. 
Throughout the paper, we say that $m_{\infty,n}$ (resp. $m_{M,n}$) is $\mathds{L}^2$ consistent if $R(m_{\infty,n})$ (resp. $R(m_{M,n})$) tends to zero as $n\to \infty$.

As mentioned earlier, there is a large variety of forests, depending on how trees are grown and how the randomness $\Theta$ influences the tree construction. 
For instance, tree construction can be independent of $\mathcal{D}_n$ \citep[][]{Bi12}, depend only on the $\bX_i$'s \citep[][]{BiDeLu08} or depend on the whole training set \citep[][]{CuZh01,GeErWe06, ZhZeKo12}.
Throughout the paper, we use Breiman's forests and uniform forests to exemplify our results. In Breiman's original procedure, splits depend on the whole sample and are performed to minimize variance within the two resulting cells. The algorithm stops when each cell contains less than a small pre-specified number of points (typically, $5$ in regression and $1$ in classification). On the other hand, uniform forests are a simpler procedure since, at each node, a coordinate is uniformly selected among $\{1, \hdots, d\}$ and a split position is uniformly chosen in the range of the cell, along the pre-chosen coordinate. The algorithm stops when a full binary tree of level $k$ is built, that is if each cell has been cut exactly $k$ times, where $k\in \mathds{N}$ is a parameter of the algorithm. 

%
%
%
%
%
%
%
%
%
%
%
%
%

In the rest of the paper, we will repeatedly use the random forest connection function $K_n$,  defined as 
\begin{center}
\begin{tabular}{cccl}
$K_n: $		& 	 $[0,1]^d  \times  [0,1]^d$     & $\to$     & $[0,1]$\\
			& 	 $(   \bx           ,     \bz    )$ & $\mapsto$ & $\P_{\T}\left[\bx \overset{\T}{\leftrightarrow} \bz \right]$,
\end{tabular}
\end{center}
where $\bx \overset{\T}{\leftrightarrow} \bz$ is the event where $\bx$ and $\bz$ belong to the same cell in the tree  $\mathcal{T}_n(\Theta)$ designed with $\Theta$ and $\mathcal{D}_n$. Moreover, notation $\mathds{P}_{\Theta}$ denotes the probability with respect to $\Theta$, conditionally on $\mathcal{D}_n$. The same notational convention holds for the expectation $\mathds{E}_{\Theta}$ and the variance $\mathds{V}_{\Theta}$. Thus, if we fix the training set $\mathcal{D}_n$, we see that the connection $K_n(\bx,\bz)$ is just the proportion of trees in which $\bx$ and $\bz$ are connected.

We say that a forest is discrete (resp. continuous) if, keeping $\mathcal{D}_n$ fixed, its connection function $K_n(\bullet, \bullet)$ is piecewise constant (resp. continuous).
In fact, most existing forest models fall in one of these two categories. 
For example, if, at each cell, the number of possible splits is finite, then the forest is discrete. This is the case of Breiman's forests, where splits can only be performed at the middle of two consecutive data points along any coordinate. However, if splits are drawn according to some density along each coordinate, the resulting forest is continuous. For instance, uniform forests are continuous.

\section{Finite and infinite random forests}

Contrary to finite forests which depend upon the particular $\Theta_j$'s used to design trees, infinite forests do not and are therefore more amenable to mathematical analysis. Besides, finite forests predictions can be difficult to interpret since they depend on the random parameters $\Theta_j$'s. In addition, the $\Theta_j$'s are independent of the data set and thus unrelated to the particular prediction problem.

%
%

In this section, we study the link between finite forests and infinite forests. More specifically, assuming that the data set $\mathcal{D}_n$ is fixed, we examine the asymptotic behavior of the finite forest estimate $m_{M,n}(\bullet, \Theta_1, \hdots, \Theta_M)$ as $M$ tends to infinity. This setting is consistent with practical problems, where the $\mathcal{D}_n$ is fixed, and one can grow as many trees as possible. 

Clearly, by the law of large numbers, we know that conditionally on $\mathcal{D}_n$, for all $\bx \in [0,1]^d$, almost surely, 
\begin{align}
m_{M,n}(\bx, \T_1, \hdots, \T_M) \underset{M \to \infty}{\to} m_{\infty,n}(\bx). \label{LLN_first}
\end{align}
The following theorem extend the pointwise convergence in (\ref{LLN_first}) to the convergence of the whole functional estimate $m_{M,n}(\bullet, \Theta_1, \hdots, \Theta_M)$, towards the functional estimate $m_{\infty, n}(\bullet)$.
\begin{theorem}
\label{almost_sure_convergence}
Consider a continuous or discrete random forest. Then, conditionally on $\mathcal{D}_n$, almost surely, for all $\bx \in [0,1]^d$, we have
\begin{align*}
m_{M,n}(\bx, \T_1, \hdots, \T_M) \underset{M \to \infty}{\to} m_{\infty,n}(\bx).
\end{align*}
\end{theorem}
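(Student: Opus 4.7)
The task is to upgrade the pointwise SLLN from \eqref{LLN_first}, which produces a $\Theta$-null set depending on $\bx$, into a statement that holds simultaneously on a single null set for every $\bx \in [0,1]^d$. Because $[0,1]^d$ is uncountable, a union bound cannot be applied directly, and I would split the proof into the discrete and continuous cases mentioned in the theorem.

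\textbf{Discrete case.} With $\mathcal{D}_n$ fixed, piecewise constancy of $K_n$ should force the random partition $\mathcal{T}_n(\Theta)$, and therefore the random function $\bx \mapsto m_n(\bx, \Theta)$, to take at most countably many values $(f_k)_{k \ge 1}$, with probabilities $p_k = \mathds{P}_{\Theta}[m_n(\bullet, \Theta) = f_k]$. Letting $p_{k,M}$ denote the empirical proportion of trees equal to $f_k$ among $\Theta_1, \ldots, \Theta_M$, I would apply the SLLN to each indicator $\mathds{1}_{m_n(\bullet, \Theta_m) = f_k}$ and take a countable intersection of the resulting full-measure events, yielding $p_{k,M} \to p_k$ for all $k$ on a single event. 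Since $|f_k(\bx)| \le \max_i |Y_i|$ with $\mathcal{D}_n$ fixed, Scheffé's lemma upgrades the coordinatewise convergence to $\sum_k |p_{k,M} - p_k| \to 0$, so that
\[
m_{M,n}(\bx) \;=\; \sum_k p_{k,M}\, f_k(\bx) \;\longrightarrow\; \sum_k p_k\, f_k(\bx) \;=\; m_{\infty,n}(\bx)
\]
for every $\bx$ on this same event.

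\textbf{Continuous case.} Here the partition may take uncountably many values, so enumeration fails. I would decompose
\[
m_n(\bx, \Theta) \;=\; \sum_{i=1}^n Y_i\, W_i(\bx, \Theta), \qquad W_i(\bx, \Theta) \;=\; \frac{\mathds{1}_{\bx \overset{\Theta}{\leftrightarrow} \bX_i}}{\sum_{j=1}^n \mathds{1}_{\bx \overset{\Theta}{\leftrightarrow} \bX_j}}
\]
(with the convention $0/0 = 0$ on empty cells), and establish a uniform SLLN via empirical process theory, in line with the toolkit announced in the abstract. The class $\mathcal{F}_i = \{\Theta \mapsto \mathds{1}_{\bx \overset{\Theta}{\leftrightarrow} \bX_i} : \bx \in [0,1]^d\}$ records membership of $\bx$ in the random axis-aligned cell $C_i(\Theta)$ containing $\bX_i$; since an arrangement of $M$ axis-aligned rectangles in $[0,1]^d$ has at most $O(M^{2d})$ components, $\mathcal{F}_i$ has finite VC dimension and is Glivenko-Cantelli. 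As $W_i$ is a finite Boolean combination of such indicators (one selects which subset of $\{\bX_1,\ldots,\bX_n\}$ the cell of $\bx$ contains and divides by its cardinality), the uniform SLLN transfers to $W_i$ and, by summing the finitely many contributions over $i$, to $m_{M,n}$, giving simultaneous convergence for every $\bx$.

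The principal obstacle lies in the continuous case: one needs a Glivenko-Cantelli statement robust enough to cover general continuous forests rather than only the uniform example, which requires controlling the geometric complexity of the admissible tree partitions; and the ratio $\mathds{1}/N_n$ must be handled carefully, since its denominator can vanish on cells lying far from the data.
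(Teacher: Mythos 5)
Your split into discrete and continuous cases matches the paper's structure, and your discrete-case argument, while phrased differently, is essentially the paper's: the paper shows the family $\mathcal{H}=\{\theta\mapsto\mathds{1}_{\bx\overset{\theta}{\leftrightarrow}\bz}/N_n(\bx,\theta)\}$ is \emph{finite}, then applies the strong law to each of the finitely many members; you observe instead that the whole random function $m_n(\bullet,\Theta)$ takes countably (in fact finitely) many values and pass through Scheff\'e, which works but is heavier than needed. Be aware that the step ``piecewise constancy of $K_n$ forces the partition to take countably many values'' is itself a derived fact, not an immediate consequence of the definition; the paper derives it via a short inequality showing $f_{\bx,\bz}(\theta)=f_{a_i,a_j}(\theta)$ almost surely whenever $\bx\in A_i,\bz\in A_j$.

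Your continuous-case argument is genuinely different from the paper's. The paper does not invoke Glivenko--Cantelli or VC theory here: it applies the pointwise SLLN only on a countable dense set ($[0,1]^d\cap\mathbb{Q}^d$), then, for general $\bx,\bz$, picks nearby rational points $\bx_1,\bx_2,\bz_1,\bz_2$ and uses \emph{continuity of the connection function $K_n$} to sandwich $W_n^M(\bx,\bz)-W_n^\infty(\bx,\bz)$ between quantities that are small once $M$ is large. This is elementary and directly exploits the continuity hypothesis in the theorem statement. Your VC/GC route --- counting cells in arrangements of axis-aligned boxes to bound the shatter coefficient of $\{\theta\mapsto\mathds{1}_{\bx\overset{\theta}{\leftrightarrow}\bX_i}\}$, then closing the class under finite Boolean operations and linear combinations --- is plausible and in fact would give something stronger (it does not use continuity of $K_n$ at all, only the axis-parallel geometry of tree cells), but it is closer in spirit to the paper's proof of the uniform CLT (Theorem~\ref{theo_convergence_distribution_foret}), which goes through empirical-process entropy bounds, than to the paper's Theorem~\ref{almost_sure_convergence} proof. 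The places where your sketch is thin are exactly the places such empirical-process arguments require care: a measurability condition on the uncountable class (image admissible Suslin / pointwise separability) is needed before finite VC dimension can be converted into an almost-sure uniform LLN, and you would also need the permanence of the Glivenko--Cantelli property under the finite Boolean/ratio operations you apply to get from the indicators to the weights $W_i$. These are standard but unproved in your sketch. You were right to flag the empty-cell / vanishing-denominator issue; in the paper this is handled implicitly by the $0/0=0$ convention, and the boundedness $|f_k(\bx)|\le\max_i|Y_i|$ you already noted.
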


\begin{remark}
Since the set $[0,1]^d$ is not countable, we cannot reverse the ``almost sure'' and ``for all $\bx \in [0,1]^d$'' statements in (\ref{LLN_first}). Thus, Theorem \ref{almost_sure_convergence} is not a consequence of (\ref{LLN_first}). 
\end{remark}

Theorem \ref{almost_sure_convergence} is a first step to prove that infinite forest estimates can be uniformly approximated by finite forest estimates. To pursue the analysis, a natural question is to determine the rate of convergence in Theorem \ref{almost_sure_convergence}. The pointwise rate of convergence is provided by the central limit theorem which says that, conditionally on $\mathcal{D}_n$, for all $\bx \in [0,1]^d$, 
\begin{align}
\sqrt{M} \big( m_{M,n}(\bx,\T_1, \hdots, \T_M) - m_{\infty,n}(\bx) \big) \overset{\mathcal{L}}{\underset{M \to \infty}{\rightarrow}}  \mathcal{N}\big(0, \tilde{\sigma}^2(\bx)\big), \label{TCL_first}
\end{align}
where $$\tilde{\sigma}^2(\bx) = \mathds{V}_{\Theta} \left( \frac{1}{N_n(\bx, \Theta)} \sum_{i=1}^n Y_i \mathds{1}_{\bx \overset{\Theta}{\leftrightarrow} \bX_i} \right) \leq 4 \max\limits_{1 \leq i \leq n} Y_i^2$$ 
and $N_n(\bx, \Theta)$ is the number of data points falling into the cell of the tree $\mathcal{T}_n(\T)$ which contains $\bx$.

Equation (\ref{TCL_first}) is not sufficient to determine the asymptotic distribution of the functional estimate $m_{M,n}(\bullet, \T_1, \hdots, \T_M)$. To make it explicit, we need to introduce the empirical process $\mathds{G}_M$ \citep[see][]{VaWe96} defined by 
\begin{align*}
\mathds{G}_M = \sqrt{M} \left( \frac{1}{M}\sum_{m=1}^M \delta_{\Theta_m} - \mathds{P}_{\Theta} \right),
\end{align*}
where $\delta_{\Theta_m}$ is the Dirac function at $\Theta_m$. We also let $\mathcal{F}_2 = \lbrace g_{\bx}: \theta \mapsto m_n(\bx, \theta); \bx \in$  $ [0,1]^d \rbrace$ be the collection of all possible tree estimates in the forest. In order to prove that a uniform central limit theorem holds for random forest estimates, we need to show that there exists a Gaussian process $\mathds{G}$ such that 
\begin{align}
\sup\limits_{g \in \mathcal{F}_2} \bigg\lbrace \int_{\Theta} |g(\theta)| \textrm{d}\mathds{G}_M(\theta) - \int_{\Theta} |g(\theta)| \textrm{d}\mathds{G}(\theta) \bigg\rbrace \underset{M \to \infty}{\to} 0,\label{statement_donsker}
\end{align}
where the first part on the left side can be written as 
\begin{align*}
\int_{\Theta} |g(\theta)| \textrm{d}\mathds{G}_M(\theta) = \sqrt{M} \left( \frac{1}{M}\sum_{m=1}^M |g(\Theta_m)| - \mathds{E}_{\Theta}\big[ |g(\Theta)|\big] \right).
\end{align*}
For more clarity, instead of (\ref{statement_donsker}), we will write
\begin{align}
\sqrt{M} \left( \frac{1}{M} \sum_{m=1}^M m_n(\bullet , \Theta_m)- \mathds{E}_{\Theta} \left[ m_n(\bullet, \Theta) \right] \right) \overset{\mathcal{L}}{\to} \mathds{G} g_{\bullet}. \label{functional_TCL_equation}
\end{align}
To establish identity (\ref{functional_TCL_equation}), we first define, for all $\varepsilon>0$, the random forest grid step $\delta(\varepsilon)$ by
\begin{align*}
\delta(\varepsilon) = \sup \left\lbrace \eta \in \mathds{R}:  \displaystyle \mathop{\sup_{\bx_1, \bx_2 \in [0,1]^d}}_{\|\bx_1 - \bx_2\|_{\infty} \leq \eta} \big|1 - K_n(\bx_1, \bx_2)\big| \leq \frac{\varepsilon^2}{8} \right\rbrace,
\end{align*}
where $K_n$ is the connection function of the forest. The function $\delta$ can be seen as the modulus of continuity of $K_n$ in the sense that it is the distance such that $K_n(\bx_1, \bx_2)$ does not vary of much that $\varepsilon^2/8$ if $\|\bx_1 - \bx_2\|_{\infty} \leq \delta(\varepsilon)$. We will also need the following assumption. 
\begin{assumption}
{\bf (H1)}
One of the following properties is satisfied:
\begin{itemize}
\item The random forest is discrete,
\item There exist $C,A >0$, $\alpha < 2$ such that, for all $\varepsilon>0$, 
\begin{align*}
\delta(\varepsilon) \geq C \exp(- A/\varepsilon^{\alpha}).
\end{align*}
\end{itemize}
\end{assumption}

Observe that {\bf (H1)} is mild since most forests are discrete and the only continuous forest we have in mind, the uniform forest, satisfies {\bf (H1)}, as stated in Lemma \ref{grid_step_uniform_forest} below. 

\begin{lemme} \label{grid_step_uniform_forest}
Let $k \in \mathds{N}$. Then, for all $\varepsilon>0$, the grid step $\delta(\varepsilon)$ of uniform forests of level $k$ satisfies
\begin{align*}
\delta(\varepsilon) \geq \exp\left( -\frac{A_{k,d}}{\varepsilon^{2/3}}\right),
\end{align*}
where $A_{k,d} = (8de(k+2)!)^{1/3}$. 
\end{lemme}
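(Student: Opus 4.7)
The plan is first to derive a deterministic upper bound on $1 - K_n(\bx_1, \bx_2)$ as a function of $\eta := \|\bx_1 - \bx_2\|_\infty$, and then invert it to obtain the lower bound on $\delta(\varepsilon)$. More precisely, it suffices to establish
\[
1 - K_n(\bx_1, \bx_2) \leq \frac{de(k+2)!}{(-\log \eta)^3},
\]
since setting the right-hand side equal to $\varepsilon^2/8$ and solving for $\eta$ produces exactly $\eta \leq \exp(-A_{k,d}/\varepsilon^{2/3})$ with $A_{k,d}^3 = 8de(k+2)!$, which by definition of the grid step gives $\delta(\varepsilon) \geq \exp(-A_{k,d}/\varepsilon^{2/3})$.

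To establish this bound, I would write $1 - K_n(\bx_1, \bx_2) = \Pt[\bx_1 \text{ and } \bx_2 \text{ lie in different leaves of } \mathcal{T}_n(\Theta)]$ and apply a union bound over the $k$ splits on the path of $\bx_1$: $1 - K_n(\bx_1, \bx_2) \leq \sum_{t=1}^k p_t$, where $p_t$ is the probability that the split at level $t$ separates the two points given they were together at step $t-1$. Conditioning on the cell $C_{t-1} \ni \bx_1, \bx_2$ with side lengths $L_j^{(t-1)}$, the next uniform split separates them with probability $\frac{1}{d}\sum_j |x_{1,j} - x_{2,j}|/L_j^{(t-1)}$, which is at most $\min\bigl(\frac{\eta}{d}\sum_j 1/L_j^{(t-1)},\, 1\bigr)$. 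In direction $j$, $L_j^{(t-1)}$ is a product of $N_j^{(t-1)} \sim \text{Binomial}(t-1, 1/d)$ factors that (in the worst case, $\bx_1$ near a boundary) are stochastically dominated by iid $\text{Uniform}[0,1]$ random variables, so $-\log L_j^{(t-1)}$ is dominated by a Gamma variable. Since $\E[1/V] = \infty$ for $V$ uniform, naive moment bounds fail and I would truncate at scale $\eta$:
\[
\Et\!\left[\frac{\eta}{L_j^{(t-1)}} \wedge 1\right] \leq \eta\,\Et\!\left[\frac{1}{L_j^{(t-1)}}\mathds{1}_{L_j^{(t-1)} > \eta}\right] + \Pt\!\left[L_j^{(t-1)} \leq \eta\right],
\]
bounding the first term by direct integration against the explicit Gamma-type density of $L_j^{(t-1)}$ and the second by a standard Gamma tail inequality. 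Summing over $t$ and $j$, simplifying, and then weakening to the clean $1/(-\log\eta)^3$ form (absorbing all factorial and dimensional constants into the coefficient $de(k+2)!$) gives the required bound.

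The main obstacle is the control of $\Et[1/L_j^{(t-1)}\mathds{1}_{L_j^{(t-1)} > \eta}]$: since $1/V$ is not integrable for $V \sim \text{Uniform}[0,1]$, no naive moment bound is available, and truncation at scale $\eta$ is essential. Matching the precise constant $A_{k,d} = (8de(k+2)!)^{1/3}$ requires careful bookkeeping of the Gamma integrations, the binomial distribution of $N_j^{(t-1)}$, and the final weakening step that converts the sharper bound (polynomial in $(-\log \eta)^{k-1}/(k-1)!$) into the simple $1/(-\log \eta)^3$ form that enables the clean inversion required for the statement.
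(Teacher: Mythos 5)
Your proposal is correct in spirit but follows a genuinely different route from the paper. The paper proceeds in three steps: (a) a coupling argument (Technical Lemma~\ref{tech_lemma_1}) showing that the worst case for connectivity is when one point sits at the boundary, so $K_k(\bx,\bz)\geq K_k(0,|\bx-\bz|)$; (b) an explicit induction (Technical Lemma~\ref{lemme_connection_probability_uniform_forest_dim1}) giving the exact one-dimensional formula $K_k(0,x)=1-x\sum_{j=0}^{k-1}(-\ln x)^j/j!$; and (c) a union bound over coordinates and a crude bound $e^{-w}\sum_{j<k}w^j/j!\leq (k+2)!e/w^3$, followed by inversion. Your proposal instead does a path-wise union bound over the $k$ levels, bounds the per-level separation probability by $\frac{1}{d}\sum_j\eta/L_j^{(t-1)}\wedge 1$, reduces to the boundary worst case so that the side lengths stochastically dominate products of i.i.d.\ uniforms, and then truncates at scale $\eta$ to tame the non-integrable $1/L_j$. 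Since $-\log L_j$ conditionally is Gamma-distributed, the truncated integral plus the Gamma tail recover $\eta\sum_{j\leq n} w^j/j!$, which is essentially the same quantity the paper gets from its exact formula (up to a harmless factor of order $k$ that is easily absorbed into $(k+2)!$), so your final inversion matches. The paper's route is slicker in that the exact formula removes all slack in one dimension and the coupling lemma isolates the boundary reduction into a clean statement; your route avoids both technical lemmas at the cost of the truncation argument and slightly heavier bookkeeping. One small slip: you write that the factors are ``stochastically dominated by'' i.i.d.\ $\text{Uniform}[0,1]$ variables, but it is the reverse --- placing $\bx_1$ at the boundary is the worst case, so the retained fractions stochastically \emph{dominate} uniforms, which is what makes $-\log L_j^{(t-1)}$ stochastically \emph{smaller} than (i.e.\ dominated by) a Gamma variable, exactly the conclusion you then draw. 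The inequality direction you use afterwards is correct, so this is only a mislabeling, but worth fixing.
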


The following theorem states that a uniform central limit theorem is valid over the class of random forest estimates, providing that {\bf (H1)} is satisfied.

\begin{theorem}
\label{theo_convergence_distribution_foret}
Consider a random forest which satisfies {\bf (H1)}. Then,
\begin{align*}
\sqrt{M} \left( m_{M,n}(\bullet) - m_{\infty,n}(\bullet)  \right) \overset{\mathcal{L}}{\to} \mathds{G} g_{\bullet},
\end{align*}
where $\mathds{G}$ is a Gaussian process with mean zero and a covariate function
\begin{align*}
\emph{Cov}_{\Theta}(\mathds{G} g_{\bx}, \mathds{G} g_{\bz}) = \emph{Cov}_{\Theta} \left( \sum_{i=1}^n Y_i \frac{\mathds{1}_{\bx  \overset{\Theta}{\leftrightarrow} {\bf X}_i}}{N_n(\bx, \Theta)}, \sum_{i=1}^n Y_i \frac{\mathds{1}_{\bz  \overset{\Theta}{\leftrightarrow} {\bf X}_i}}{N_n(\bz, \Theta)} \right).
\end{align*}
\end{theorem}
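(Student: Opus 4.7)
The plan is to recognize the statement as a functional central limit theorem for the empirical process $\mathds{G}_M$ indexed by the class $\mathcal{F}_2 = \{g_{\bx} : \theta \mapsto m_n(\bx,\theta), \bx \in [0,1]^d\}$, viewed under the law $\mathds{P}_{\Theta}$ with $\mathcal{D}_n$ held fixed. Once $\mathcal{F}_2$ is shown to be $\mathds{P}_{\Theta}$-Donsker with a square-integrable envelope, a standard empirical process theorem (e.g.\ Theorem~19.5 in van der Vaart, or Theorem~2.5.2 in van der Vaart--Wellner) delivers the weak convergence of $\sqrt{M}(m_{M,n}(\bullet) - m_{\infty,n}(\bullet))$ to a tight mean-zero Gaussian process indexed by $[0,1]^d$. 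Finite-dimensional convergence will come from the ordinary multivariate CLT applied to the i.i.d.\ bounded vectors $(g_{\bx^{(j)}}(\Theta_m))_{j,m}$, which pins down the covariance as $\mathrm{Cov}_{\Theta}(g_{\bx}(\Theta), g_{\bz}(\Theta))$; unpacking the tree estimate $m_n(\bx,\Theta) = \sum_i Y_i \mathds{1}_{\bx \overset{\Theta}{\leftrightarrow} \bX_i}/N_n(\bx,\Theta)$ yields exactly the formula in the statement.

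The key step linking $\mathcal{F}_2$ to the connection function $K_n$ is the observation that tree estimates are constant on connection classes: if $\bx \overset{\Theta}{\leftrightarrow} \bz$, then $g_{\bx}(\Theta) = g_{\bz}(\Theta)$. Since $|m_n(\bx,\Theta)| \leq \max_{1 \leq i \leq n} |Y_i|$, this yields the $L^2(\mathds{P}_{\Theta})$ modulus bound
\begin{align*}
\mathds{E}_{\Theta}\bigl[(g_{\bx}(\Theta) - g_{\bz}(\Theta))^2\bigr] \leq 4 \max_{1 \leq i \leq n} Y_i^2 \cdot (1 - K_n(\bx,\bz)).
\end{align*}
Consequently, whenever $\|\bx - \bz\|_{\infty} \leq \delta(\varepsilon)$, the two functions are within $O(\varepsilon)$ in $L^2(\mathds{P}_{\Theta})$. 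Covering $[0,1]^d$ by cubes of side $\delta(\varepsilon)$ and forming brackets from the supremum and infimum of $g_{\bx'}$ over each cube (whose widths are again controlled by the display above), one obtains a bracketing number bound $N_{[\,]}(\varepsilon, \mathcal{F}_2, L^2(\mathds{P}_{\Theta})) \leq C_1 \delta(\varepsilon)^{-d}$.

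Under the second item of \textbf{(H1)}, $\delta(\varepsilon) \geq C \exp(-A/\varepsilon^{\alpha})$ gives a bracketing entropy $\log N_{[\,]}(\varepsilon, \mathcal{F}_2, L^2(\mathds{P}_{\Theta})) \leq C_2/\varepsilon^{\alpha}$, and the Dudley bracketing integral $\int_0^1 \sqrt{\log N_{[\,]}(\varepsilon, \mathcal{F}_2, L^2(\mathds{P}_{\Theta}))}\,d\varepsilon$ is finite precisely because $\alpha < 2$. For discrete forests, the first item of \textbf{(H1)} applies: $K_n$ takes only finitely many values on $[0,1]^d \times [0,1]^d$, so $\mathcal{F}_2$ consists (up to $\mathds{P}_{\Theta}$-almost sure equivalence) of finitely many uniformly bounded functions, which is automatically Donsker. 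The envelope $\max_{i} |Y_i|$ is a deterministic constant given $\mathcal{D}_n$, so all integrability conditions are trivial.

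The main obstacle I expect is the bracketing step in the continuous case: the $L^2$ modulus bound must be upgraded from a mere $L^2$-net to genuine brackets, which requires carefully writing explicit upper/lower envelopes $g_{\bx}^{\pm}$ on each covering cube and showing that the resulting bracket widths still scale like $\varepsilon$. The alternative is to reformulate the argument using uniform entropy and a VC-type bound on the family $\{\bx \overset{\theta}{\leftrightarrow} \bX_i\}$, but the bracketing route is the one naturally suggested by the definition of $\delta(\varepsilon)$. After the Donsker property is in hand, identifying the limit Gaussian process is routine: the covariance structure is forced by finite-dimensional convergence, and Lemma~\ref{grid_step_uniform_forest} guarantees that uniform forests fall within the scope of \textbf{(H1)}, giving Breiman's forests (discrete) and uniform forests (continuous) as concrete instances.
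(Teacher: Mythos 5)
Your overall architecture---show that the class of tree estimates is $\mathds{P}_{\Theta}$-Donsker via a bracketing entropy bound driven by $\delta(\varepsilon)$, then read off the covariance from the finite-dimensional CLT---is the same as the paper's. But you bracket a different class. The paper first proves that the simpler \emph{nonnegative} class $\mathcal{H}=\{\theta\mapsto \mathds{1}_{\bx\overset{\theta}{\leftrightarrow}\bz}/N_n(\bx,\theta):\bx,\bz\in[0,1]^d\}$ is $\mathds{P}_{\Theta}$-Donsker by building explicit monotone brackets $f_{\bx_1,\bz_2}\leq f_{\bx,\bz}\leq f_{\bx_2,\bz_1}$ with $\bx_1,\bx_2,\bz_1,\bz_2$ grid points positioned so the inequalities hold for \emph{every} $\theta$ (this is where the rectangular geometry of tree cells is used), counts the brackets as $(1/\delta)^{4d}$, and invokes Theorem~2.5.6 of van der Vaart--Wellner. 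It then transfers to $\mathcal{F}_2$ by linearity: with $\mathcal{D}_n$ fixed, $g_{\bx}=\sum_i Y_i f_{\bx,\bX_i}$ is a fixed finite linear combination of members of $\mathcal{H}$, so the empirical process over $\mathcal{F}_2$ is dominated by $(\sum_i|Y_i|)$ times that over $\mathcal{H}$. You instead bracket $\mathcal{F}_2$ directly, which is a genuine variant.

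The step you flag as the main obstacle is indeed where the argument is incomplete, and it is the one non-trivial idea. Your pairwise modulus bound $\mathds{E}_{\Theta}[(g_{\bx}-g_{\bz})^2]\leq 4\max_i Y_i^2\,(1-K_n(\bx,\bz))$ controls an $L^2$-net, but does not by itself control the $L^2$ width of the sup/inf brackets over a covering cube $C$, because the supremum over uncountably many pairs does not commute with the expectation. What closes the gap is the same rectangularity fact the paper exploits in building the $\mathcal{H}$-brackets: tree cells are axis-aligned boxes, so if $\bx_-$, $\bx_+$ denote the componentwise minimal and maximal corners of $C$ and $\bx_-\overset{\theta}{\leftrightarrow}\bx_+$, then all of $C$ lies in a single cell of the $\theta$-tree and $g_{\bx}(\theta)$ is constant on $C$. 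Hence
\begin{align*}
\sup_{\bx\in C} g_{\bx}(\theta)-\inf_{\bx\in C} g_{\bx}(\theta)\;\leq\; 2\Bigl(\max_{1\leq i\leq n}|Y_i|\Bigr)\,\mathds{1}_{\bx_-\overset{\theta}{\nleftrightarrow}\bx_+},
\end{align*}
so the bracket width in $L^2(\mathds{P}_{\Theta})$ is at most $2\max_i|Y_i|\sqrt{1-K_n(\bx_-,\bx_+)}$, which $\delta(\varepsilon)$ is designed to control. With that envelope estimate inserted, your bracketing count $N_{[\,]}\lesssim \delta(\varepsilon)^{-d}$, the Dudley integral for $\alpha<2$, the finite discrete case, and the covariance identification all go through as you describe. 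The paper's detour through $\mathcal{H}$ trades this envelope computation for a direct pointwise monotone comparison of the nonnegative $f_{\bx,\bz}$, which is arguably cleaner but otherwise rests on the same geometric observation.
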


According to the discussion above, Theorem \ref{theo_convergence_distribution_foret} holds for uniform forests (by Lemma \ref{grid_step_uniform_forest}) and Breiman's forests (since they are discrete). Moreover, according to this Theorem, the finite forest estimates tend uniformly to the infinite forest estimates, with the standard rate of convergence $\sqrt{M}$. This result contributes to bridge the gap between finite forests used in practice and infinite theoretical forests.

The proximity between two estimates can also be measured in terms of their $\mathds{L}^2$ risk. In this respect, Theorem \ref{lemme_dependency_M_n} states that the risk of infinite forests is lower than the one of finite forests and provides a bound on the difference between these two risks. We first need an assumption on the regression model.
\begin{assumption}{\bf (H2)}
One has 
\begin{align*}
Y = m(\bX) + \varepsilon,
\end{align*}
where $\varepsilon$ is a centered Gaussian noise with finite variance $\sigma^2$, independent of $\bX$, and $ \|m\|_{\infty} =  \sup\limits_{\bx \in [0,1]^d} |m(\bx)|$ $ < \infty.$
\end{assumption}

\begin{theorem} \label{lemme_dependency_M_n}
Assume that {\bf (H2)} is satisfied. Then, for all $M,n\in \mathds{N}^{\star}$, 
\begin{align*}
R(m_{M,n}) = R(m_{\infty,n}) + \frac{1}{M} \,\mathds{E}_{{\bf X}, \mathcal{D}_n} \Big[ \mathds{V}_{\Theta} \left[ m_n(\bX, \Theta) \right] \Big]. 
\end{align*}
In particular, 
\begin{align*}
0 \leq R(m_{M,n}) - R(m_{\infty,n}) & \leq \frac{8}{M} \times \big( \|m\|_{\infty}^2 +  \sigma^2 (1 + 4\log n) \big).
\end{align*}
\end{theorem}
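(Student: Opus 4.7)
The plan is to decompose the squared error inside $R(m_{M,n})$ around the infinite-forest value $m_{\infty,n}(\bX)$ and exploit the fact that, conditionally on $(\bX,\mathcal{D}_n)$, the tree outputs $m_n(\bX,\Theta_1),\ldots,m_n(\bX,\Theta_M)$ are i.i.d.\ with mean $m_{\infty,n}(\bX)$. Concretely, I would write
\begin{align*}
m_{M,n}(\bX,\Theta_1,\ldots,\Theta_M)-m(\bX) = \bigl(m_{M,n}-m_{\infty,n}(\bX)\bigr) + \bigl(m_{\infty,n}(\bX)-m(\bX)\bigr),
\end{align*}
square, and take expectation. Conditioning first on $(\bX,\mathcal{D}_n)$ and then taking $\mathds{E}_{\Theta_1,\ldots,\Theta_M}$ makes the cross term vanish because $\mathds{E}_{\Theta_1,\ldots,\Theta_M}[m_{M,n}(\bX,\Theta_1,\ldots,\Theta_M)\mid \bX,\mathcal{D}_n]=m_{\infty,n}(\bX)$, and the same i.i.d.\ property turns the first square into $\frac{1}{M}\mathds{V}_{\Theta}[m_n(\bX,\Theta)]$. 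This immediately yields the identity $R(m_{M,n})=R(m_{\infty,n})+\frac{1}{M}\mathds{E}_{\bX,\mathcal{D}_n}\bigl[\mathds{V}_{\Theta}[m_n(\bX,\Theta)]\bigr]$ and the nonnegative lower bound $R(m_{M,n})-R(m_{\infty,n})\geq 0$.

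For the upper bound, I will bound the variance pointwise. Since $m_n(\bX,\Theta)=\frac{1}{N_n(\bX,\Theta)}\sum_{i=1}^n Y_i\mathds{1}_{\bX\overset{\Theta}{\leftrightarrow}\bX_i}$ is a convex combination of the $Y_i$'s whenever the relevant cell is nonempty (and is set to $0$ otherwise, by the usual convention), one has $|m_n(\bX,\Theta)|\leq \max_{1\leq i\leq n}|Y_i|$, hence $\mathds{V}_{\Theta}[m_n(\bX,\Theta)]\leq \max_{1\leq i\leq n}Y_i^{\,2}$. Taking expectation and using $Y_i=m(\bX_i)+\varepsilon_i$ with $\|m\|_\infty<\infty$, I split $\max_i Y_i^2\leq 2\|m\|_\infty^2+2\max_i\varepsilon_i^2$.

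The remaining piece is the Gaussian maximum bound $\mathds{E}[\max_{1\leq i\leq n}\varepsilon_i^2]\leq c\,\sigma^2\log n$ (up to the numerical constants needed to land on $8(\|m\|_\infty^2+\sigma^2(1+4\log n))$). The cleanest route is the standard MGF argument: for $\lambda=1/(4\sigma^2)$ one has $\mathds{E}[e^{\lambda\varepsilon_i^2}]=\sqrt{2}$, so by Jensen applied to $\exp(\lambda\,\cdot\,)$,
\begin{align*}
\mathds{E}\Bigl[\max_{1\leq i\leq n}\varepsilon_i^2\Bigr]\leq \frac{1}{\lambda}\log\Bigl(n\,\mathds{E}[e^{\lambda\varepsilon_1^2}]\Bigr)=4\sigma^2\log n+2\sigma^2\log 2.
\end{align*}
Combining this with the previous display and tidying the constants yields the announced bound. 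The main obstacle, such as it is, lies only in book-keeping the constants here; the structural part of the proof (the variance decomposition) is immediate from the conditional i.i.d.\ structure of the $\Theta_m$'s.
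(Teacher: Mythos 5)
Your identity $R(m_{M,n}) = R(m_{\infty,n}) + \frac{1}{M}\,\mathds{E}_{\bX,\mathcal{D}_n}[\mathds{V}_{\Theta}[m_n(\bX,\Theta)]]$ is derived exactly as in the paper: orthogonal decomposition about $m_{\infty,n}(\bX)$, cross term killed by conditioning on $(\bX,\mathcal{D}_n)$, conditional i.i.d.\ structure of the $m_n(\bX,\Theta_m)$'s turning the remaining square into $\frac{1}{M}\mathds{V}_{\Theta}$.

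For the upper bound, however, you take a genuinely different and cleaner route. The paper writes $m_n(\bX,\Theta)=\sum_i W_{ni}(\bX,\Theta)(m(\bX_i)+\varepsilon_i)$, splits the variance into a signal and a noise part, bounds the noise part by $(\max_i\varepsilon_i - \min_j\varepsilon_j)^2 \leq 4\max_i\varepsilon_i^2$, and cites the Gaussian maximal inequality $\mathds{E}[\max_i \tilde\varepsilon_i^{\,2}]\leq 1+4\log n$ from Boucheron--Lugosi--Massart. You instead observe that $m_n(\bX,\Theta)$ is itself a (sub-)convex combination of the $Y_i$'s, so $|m_n(\bX,\Theta)|\leq\max_i|Y_i|$ pointwise in $\Theta$, hence $\mathds{V}_\Theta[m_n(\bX,\Theta)]\leq\mathds{E}_\Theta[m_n(\bX,\Theta)^2]\leq\max_i Y_i^{\,2}\leq 2\|m\|_\infty^2+2\max_i\varepsilon_i^{\,2}$, and you re-derive the Gaussian maximum bound from scratch with the MGF/Jensen argument ($\lambda=1/(4\sigma^2)$, giving $4\sigma^2\log n+2\sigma^2\log 2$). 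Your route actually produces the sharper intermediate bound $R(m_{M,n})-R(m_{\infty,n})\leq\frac{2}{M}\bigl(\|m\|_\infty^2+\mathds{E}[\max_i\varepsilon_i^{\,2}]\bigr)$; since $2\log 2<4$ the right-hand side is dominated by $\frac{8}{M}\bigl(\|m\|_\infty^2+\sigma^2(1+4\log n)\bigr)$, so the stated inequality follows with room to spare. The paper's approach buys nothing here — your pointwise bound on $|m_n|$ is more elementary, avoids the centering/range manipulation entirely, and does not rely on an external reference for the Gaussian maximum. Both arguments are correct.
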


Theorem \ref{lemme_dependency_M_n} reveals that the prediction accuracy of infinite forests is better than that of finite forests. In practice however, there is no simple way to implement infinite forests and, in fact, finite forests are nothing but Monte Carlo approximations of infinite forests. But, since the difference of risks between both types of forests is bounded (by Theorem \ref{lemme_dependency_M_n}), the prediction accuracy of finite forests is almost as good as that of infinite forests providing the number of trees is large enough. More precisely, under {\bf (H2)}, for all $\varepsilon>0$, if 
\begin{align*}
M \geq   \frac{8(\|m\|_{\infty}^2+\sigma^2)}{\varepsilon}  +  \frac{32\sigma^2 \log n}{\varepsilon},
\end{align*}
then $R(m_{M,n}) - R(m_{\infty,n}) \leq \varepsilon$.
%

Anoter interesting consequence of Theorem \ref{lemme_dependency_M_n} is that, assuming that {\bf (H2)} holds and that $M/\log n \to \infty$ as $n \to \infty$, finite random forests are consistent as soon as infinite random forests are. This alows to extend all previous consistency results regarding infinite forests \citep[see, e.g.,][]{Me06,BiDeLu08} to finite forests. It must be stressed that the ``$\log n$'' term comes from the Gaussian noise, since, if $\varepsilon_1,\hdots, \varepsilon_n$ are independent and distributed as a Gaussian noise $\varepsilon \sim \mathcal{N}(0, \sigma^2)$, we have, 
\begin{align*}
\E \left[ \max_{1\leq i \leq n} \varepsilon_i^2 \right] \leq \sigma^2(1 + 4 \log n),
\end{align*}
\citep[see, e.g., Chapter 1 in][]{BoLuMa13}. Therefore, the required number of trees depends on the noise in the regression model. For instance, if $Y$ is bounded, then the condition turns into $M \to \infty$.

\section{Consistency of some random forest models}

Section $3$ was devoted to the connection between finite and infinite forests. 
In particular, we proved in Theorem \ref{lemme_dependency_M_n} that the consistency of infinite forests implies that of finite forests, as soon as {\bf (H2)} is satisfied and $M/\log n \to \infty$. Thus, it is natural to focus on the consistency of infinite forest estimates, which can be written as
\begin{align}
m_{\infty,n }(\bX) = \sum_{i=1}^n W_{ni}^{\infty}(\bX) Y_i,\label{definition_foret_infinie}
\end{align}
where 
$$W_{ni}^{\infty}(\bX) = \E_{\T}\left[\frac{\mathds{1}_{\bX  \overset{\Theta}{\leftrightarrow} {\bf X}_i}}{N_n(\bX, \Theta)}\right]$$ 
are the random forest weights. 

Proving consistency of infinite random forests is in general a difficult task, mainly because forest construction can depend on both the $\bX_i$'s and the $Y_i$'s. This feature makes the resulting estimate highly data-dependent, and therefore difficult to analyze (this is particularly the case for Breiman's forests). To simplify the analysis, we investigate hereafter infinite random forest estimates whose weights depends only on $\bX, \bX_1, \hdots, \bX_n$ which is called the $X$-property. The good news is that when infinite forest estimates have the $X$-property, they fall in the general class of local averaging estimates, whose consistency can be addressed using Stone's \citeyearpar{St77} theorem.

%
%
%
Therefore, using Stone's theorem as a starting point, we first prove the consistency of random forests whose construction is independent of $\mathcal{D}_n$, which is the simplest case of  random forests satisfying the $X$-property. For such forests, the construction is based on the random parameter $\Theta$ only. As for now, we say that a forest is totally non adaptive of level $k$ ($k \in \mathds{N}$, with $k$ possibly depending on $n$) if each tree of the forest is built independently of the training set and if  each cell is cut exactly $k$ times. The resulting cell containing $\bX$, designed with randomness $\Theta$, is denoted by $A_n(\bX, \Theta)$.

\begin{theorem}\label{consistency_independent_forest}
Assume that $\bX$ is distributed on $[0,1]^d$ and consider a totally non adaptive forest of level $k$. In addition, assume that for all $\rho, \varepsilon >0$, there exists $N>0$ such that, with probability $1- \rho$, for all $n>N$,  
\begin{align*}
\textrm{diam}(A_n(\bX, \Theta)) \leq \varepsilon.
\end{align*}
Then, providing $k \to \infty$ and $2^{k}/n \to 0$, the infinite random forest is $\mathds{L}^2$ consistent, that is 
\begin{align*}
R(m_{\infty,n}) \to 0 \quad \textrm{as}~ n \to \infty.
\end{align*}

\end{theorem}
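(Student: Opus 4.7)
The plan is to apply Stone's theorem to the infinite random forest viewed as a local averaging estimator. Since the forest is totally non adaptive, the weights
\[
W_{ni}^{\infty}(\bX) = \Et\left[\frac{\mathds{1}_{\bX  \overset{\Theta}{\leftrightarrow} \bX_i}}{N_n(\bX, \Theta)}\,\mathds{1}_{N_n(\bX,\Theta)>0}\right]
\]
satisfy the $X$-property and verify $\sum_{i=1}^n W_{ni}^{\infty}(\bX) = \Pt(N_n(\bX,\Theta)>0) \leq 1$, so that $m_{\infty,n}$ belongs to the Stone class of estimators. I will show below that this sum tends to $1$ in probability (a consequence of $2^k/n\to 0$), which reduces the situation to the standard Stone setting.

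For the locality condition $\sum_i W_{ni}^{\infty}(\bX)\mathds{1}_{\|\bX_i-\bX\|>a}\to 0$, I would invoke the diameter assumption: for any $\rho>0$, eventually in $n$, $\P(\mathrm{diam}(A_n(\bX,\Theta))\leq a)\geq 1-\rho$, and on this event $\mathds{1}_{\bX\overset{\Theta}{\leftrightarrow}\bX_i}\mathds{1}_{\|\bX_i-\bX\|>a}=0$. The domination condition $\E\sum_i W_{ni}^{\infty}(\bX)f(\bX_i) \leq c_d\,\E f(\bX)$ follows from the standard estimate for partition-based estimators with data-independent partitions (see, e.g., Chapter~4 of \citet{GyKoKrWa02}) applied to each realization of $\Theta$ and then averaged by Fubini.

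The main obstacle is the maximum-weight condition $\E\max_i W_{ni}^{\infty}(\bX)\to 0$. Using the elementary bound
\[
W_{ni}^{\infty}(\bX)\leq \frac{1}{m} + \Pt\bigl(N_n(\bX,\Theta)<m\bigr),
\]
valid for every integer $m$, it suffices to show that $N_n(\bX,\Theta)\to\infty$ in probability jointly over $\bX,\mathcal{D}_n,\Theta$, and then to pick $m=m_n\to\infty$ slowly. Conditionally on $\bX$ and $\Theta$, the variable $N_n(\bX,\Theta)$ is $\mathrm{Binomial}\bigl(n,\mu(A_n(\bX,\Theta))\bigr)$. Writing $p_{\bX,\Theta}=\mu(A_n(\bX,\Theta))$, the crude cell-mass estimate $\P_{\bX}(p_{\bX,\Theta}\leq q\mid\Theta)\leq q\cdot 2^{k}$ (sum of $\mu(A_j)$ over at most $2^k$ cells each of mass $\leq q$), combined with $2^k/n\to 0$, lets me choose $q=q_n$ with $q_n\,2^{k}\to 0$ and $n q_n\to\infty$; a Chernoff bound then gives $N_n(\bX,\Theta)\to\infty$ in probability.

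Stone's theorem now yields $\mathds{L}^2$ consistency of $m_{\infty,n}$. The subtle point throughout is the coupling between the random cell mass $p_{\bX,\Theta}$ and the Binomial concentration of $N_n$, which is why the double truncation (first in $q$, then in $m$) is required; the remaining arguments are a direct translation of the classical partitioning theory to the averaged-over-$\Theta$ weights.
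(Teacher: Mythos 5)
Your proposal is correct and follows the same overall architecture as the paper's proof: view $m_{\infty,n}$ as a local averaging estimate with the $X$-property and check the five conditions of Stone's theorem, using the diameter hypothesis for the locality condition and the control of $N_n(\bX,\Theta)$ for conditions $(iv)$ and $(v)$. The only place where you genuinely depart from the paper is the argument that $N_n(\bX,\Theta)\to\infty$ in probability. You condition on $(\bX,\Theta)$, identify $N_n(\bX,\Theta)$ as $\mathrm{Binomial}(n,\mu(A_n(\bX,\Theta)))$, control the small-mass cells via $\P_{\bX}(\mu(A_n(\bX,\Theta))\le q\mid\Theta)\le q\,2^k$, and then run Chernoff; the choice $q_n = (n\,2^k)^{-1/2}$ makes both $q_n 2^k\to 0$ and $nq_n\to\infty$, so the truncation works. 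The paper instead uses the exchangeability trick (attributed to Biau, Devroye and Lugosi): augment the sample with $\bX$ to get $n+1$ exchangeable points, so that, given $\Theta$ and the unordered set of points, $\bX$ lands in cell $i$ with probability $N_i/(n+1)$, giving directly $\P[N_n(\bX,\Theta)<t]\le 2^k t/(n+1)$ with no concentration inequality. Both arguments are valid and rely on the same iid hypothesis; the paper's is a bit slicker (purely combinatorial, no Chernoff), while yours makes the Binomial structure and the two-scale truncation explicit, which is perhaps more transparent if one wants to quantify the rate. For condition $(v)$ the paper bounds $\E[\max_i W_{ni}^\infty]\le\E[1/N_n(\bX,\Theta)]$ and invokes dominated convergence, whereas you use the equivalent truncation $W_{ni}^\infty\le 1/m + \Pt(N_n<m)$; these are the same estimate in two disguises.
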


Theorem \ref{consistency_independent_forest} is a generalization of some consistency results in \citet{BiDeLu08} for the case of totally non adaptive random forest. Together with Theorem \ref{lemme_dependency_M_n}, we see that if {\bf (H2)} is satisfied and $M/\log n \to \infty$ as $n \to \infty$, then the finite random forest is $\mathds{L}^2$ consistent.

According to Theorem \ref{consistency_independent_forest}, a totally non adaptive forest of level $k$ is consistent if the cell diameters tend to zero as $n\to \infty$ and if the level $k$ is properly tuned. 
This is in particular true for uniform random forests, as shown in the following corollary.

\begin{corollary} \label{proposition_consistency_uniform_cut}
Assume that $\bX$ is distributed on $[0,1]^d$ and consider a uniform forest of level $k$. Then, providing that $k \to \infty$ and $2^{k}/n \to 0$, the uniform random forest is $\mathds{L}^2$ consistent. 
\end{corollary}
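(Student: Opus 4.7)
The plan is to reduce the statement to Theorem \ref{consistency_independent_forest} applied to the uniform forest. By construction, a uniform forest is totally non adaptive of level $k$: each tree is grown independently of $\mathcal{D}_n$, and each cell is split exactly $k$ times. The hypotheses $k \to \infty$ and $2^k/n \to 0$ are part of the statement, so the only nontrivial task is to verify the diameter assumption of Theorem \ref{consistency_independent_forest}.

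Fix a realization of $\bX$ and let $V_j^{(k)}$ denote the length of the side of $A_n(\bX, \Theta)$ along coordinate $j$ after $k$ splits of the cell containing $\bX$, with $V_j^{(0)} = 1$. The key step is to prove $\E[V_j^{(k)}] \leq (1 - 1/(4d))^k$ by induction. At step $k{+}1$, a coordinate is drawn uniformly in $\{1,\dots,d\}$. If coordinate $j$ is not selected (probability $1-1/d$), then $V_j^{(k+1)} = V_j^{(k)}$. Otherwise, let $\xi \in [0,1]$ denote the relative position of $X_j$ inside the current side and let $U \sim \mathcal{U}([0,1])$ be the rescaled split location. Then
\begin{align*}
V_j^{(k+1)} = V_j^{(k)} \big( U \mathds{1}_{U > \xi} + (1-U) \mathds{1}_{U < \xi} \big).
\end{align*}
A direct integration yields
\begin{align*}
\E\big[\, U \mathds{1}_{U > \xi} + (1-U) \mathds{1}_{U < \xi} \,\big|\, \xi \,\big] = \tfrac{1}{2} + \xi(1-\xi) \leq \tfrac{3}{4},
\end{align*}
uniformly in $\xi \in [0,1]$. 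Consequently $\E[V_j^{(k+1)}\mid V_j^{(k)}] \leq \big(1 - 1/d + 3/(4d)\big)\, V_j^{(k)} = (1 - 1/(4d))\, V_j^{(k)}$, and iteration gives the claim.

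Combining Markov's inequality with a union bound over coordinates,
\begin{align*}
\P\Big(\max_{1 \leq j \leq d} V_j^{(k)} > \varepsilon\Big) \leq \frac{d}{\varepsilon}\Big(1 - \frac{1}{4d}\Big)^k \underset{k \to \infty}{\longrightarrow} 0,
\end{align*}
and since $\textrm{diam}(A_n(\bX,\Theta)) \leq \sqrt{d}\max_j V_j^{(k)}$, the diameter tends to zero in probability. At fixed $\Theta$, the cells $A_n(\bX, \Theta)$ are nested as $k$ grows, so $\textrm{diam}(A_n(\bX,\Theta))$ is nonincreasing in $k$; convergence in probability therefore upgrades to almost sure convergence as $k \to \infty$. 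This delivers the ``for all $n>N$ with probability $1-\rho$'' uniformity required by Theorem \ref{consistency_independent_forest}, and the corollary follows.

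The main obstacle is the contraction step $\E[V_j^{(k+1)}\mid V_j^{(k)}] \leq (1 - 1/(4d))\,V_j^{(k)}$: it is essential that the per-cut contraction factor be bounded strictly below $1$ \emph{uniformly in the position} $\xi$ of $\bX$ within the current cell, since otherwise repeated splits would not reliably shrink the cell when $\bX$ sits near a boundary. The uniform bound $\tfrac{3}{4}$, attained at $\xi = \tfrac{1}{2}$, is exactly what makes the induction work, and it is here that the specific choice of a uniform split distribution enters.
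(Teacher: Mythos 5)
Your proof is correct and follows essentially the same route as the paper: verifying the diameter hypothesis of Theorem \ref{consistency_independent_forest} via the per-coordinate contraction factor $3/4$ from the uniform split. In fact your bound $(1-1/(4d))^k$ coincides with the paper's $\E\big[(3/4)^{K_{nj}}\big]$ with $K_{nj}\sim\mathcal{B}(k,1/d)$, since $\E[t^{\mathcal{B}(k,1/d)}]=(1-1/d+t/d)^k$; the only additions on your side (Markov plus a monotonicity upgrade to a.s.\ convergence) are harmless but not needed, as the theorem's diameter hypothesis is only used through convergence in probability.
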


For totally non adaptive forests, the main difficulty that consists in using the data set to build the forest and to predict at the same time, vanishes. However, because of their simplified construction, these forests are far from accurately modelling Breiman's forest. To take one step further into the understanding of Breiman's \citeyearpar{Br01} forest behavior, we study the $q$ ($q \in [1/2,1)$) quantile random forest, which satisfies the $X$-property. Indeed, their construction depends on the $X_i$'s which is a good trade off between the complexity of Breiman's forests and the simplicity of totally non adaptive forests. As an example of $q$ quantile trees, the median tree ($q=1/2$) has already been studied by \citet{DeGyLu96}, such as the $k$-spacing tree \citep[][]{DeGyLu96} whose construction is based on quantiles. 

In the spirit of Breiman's algorithm, before growing each tree, data are subsampled, that is $a_n$ points ($a_n<n$) are selected without replacement. Then, each split is performed on an empirical $q_n$-quantile (where $q_n \in [1-q,q]$ can be pre-specified by the user or randomly chosen) along a coordinate, chosen uniformly at random among the $d$ coordinates. Recall that the $q'$-quantile ($q' \in [1-q,q]$) of $\bX_1, \hdots, \bX_n$ is defined as the only $\bX_{(\ell)}$ satisfying $F_n(\bX_{(\ell - 1)}) \leq q_n < F_n(\bX_{(\ell)})$, where the $\bX_{(i)}$'s are ordered increasingly. 
Note that data points on which splits are performed are not sent down to the resulting cells.
Finally, the algorithm stops when each cell contains exactly one point. The full procedure is described in {\bf Algorithm 2}.

\begin{center}
\begin{algorithm}[H]
 \KwIn{Fix $a_n \in \{1, \hdots, n \}$, and $\bx \in [0,1]^d$.}
 \KwData{A training set $\mathcal{D}_n$.}
 \For{$j=1, \hdots, M$}{
 Select $a_n$ points, without replacement, uniformly in $\mathcal{D}_n$.\
 
 Set $\mathcal{P} = \{[0,1]^p\}$ the partition associated with the root of the tree.\
 
  \While{there exists $A \in \mathcal{P}$ which contains strictly more than two points}{

Select uniformly one dimension $j$ within $\{1, \hdots, p\}$. \
 
Let $N$ be the number of data points in $A$ and select $q_n \in [1-q, q]\cap$ $ (1/N, 1-1/N)$.

Cut the cell $A$ at the position given by the $q_n$ empirical quantile (see definition below) along the $j$-th coordinate.

Call $A_L$ and $A_R$ the two resulting cell. \

Set $\mathcal{P} \leftarrow (\mathcal{P}\backslash\{A\})\cup A_L \cup A_R$.\
 
 }
 \For{each $A \in \mathcal{P}$ which contains exactly two points}{
Select uniformly one dimension $j$ within $\{1, \hdots, p\}$. \

Cut along the $j$-th direction, in the middle of the two points.
 
Call $A_L$ and $A_R$ the two resulting cell. \

Set $\mathcal{P} \leftarrow (\mathcal{P}\backslash\{A\})\cup A_L \cup A_R$.\

 }
 
 Compute the predicted value $m_n(\bx, \Theta_j)$ at $\bx$ equal to the single $Y_i$ falling in the cell of $\bx$, with respect to the partition $\mathcal{P}$.
 }
 Compute the random forest estimate $m_{M,n}(\bx; \Theta_1, \hdots, \Theta_M, \mathcal{D}_n)$ at the query point $\bx$ according to equality (\ref{finite_forest}).
 

\caption{$q$ quantile forest predicted value at $\bx$.}
\end{algorithm}
\end{center}

Since the construction of $q$ quantile forests depends on the $\bX_i$'s and is based on subsampling, it is a more realistic modeling of Breiman's forests than totally non adaptive forests. It also provides a good understanding on why random forests are still consistent even when there is exactly one data point in each leaf. Theorem \ref{theoreme_convergence_quantile_forest} states that with a proper subsampling rate of the training set, the $q$ quantile random forests are consistent. 

\begin{assumption}\label{assumption_last_th} {\bf (H3)}
One has 
\begin{align*}
Y = m(\bX) + \varepsilon,
\end{align*}
where $\varepsilon$ is a centred Gaussian noise with finite variance $\sigma^2$ variable, independent of $\bX$. Moreover, $\bX$ has a density bounded from below and from above and $m$ is continuous.
\end{assumption}

\begin{theorem}\
\label{theoreme_convergence_quantile_forest}
Assume that {\bf (H3)} is satisfied. Then, providing $a_n \rightarrow \infty$ et $a_n/n \rightarrow \infty$, the infinite $q$ quantile random forest is $\mathds{L}^2$ consistent.
\end{theorem}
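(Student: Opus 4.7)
Since the $q$ quantile forest has the $X$-property, the infinite forest estimate (\ref{definition_foret_infinie}) is a local averaging rule whose weights $W_{ni}^{\infty}(\bX)$ depend only on $\bX, \bX_1, \ldots, \bX_n$. In each tree, every leaf contains exactly one subsampled observation, so the per-tree weight equals $1$ at the index of that representative point and $0$ elsewhere; averaging over $\Theta$ therefore yields $W_{ni}^{\infty}(\bX) \geq 0$, $\sum_i W_{ni}^{\infty}(\bX) = 1$, and $\max_i W_{ni}^{\infty}(\bX) \leq a_n/n \to 0$ (the sub-sampling probability). My plan is then to conclude via Stone's theorem once I verify (i) $\sum_i W_{ni}^{\infty}(\bX)\mathds{1}_{\|\bX_i-\bX\|>\eta} \to 0$ in probability for every $\eta>0$, and (ii) the majoration inequality $\mathds{E}\bigl[\sum_i W_{ni}^{\infty}(\bX) g(\bX_i)\bigr] \leq c\,\mathds{E}[g(\bX)]$ for a constant $c$ and every nonnegative Borel $g$. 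Combined with Theorem \ref{lemme_dependency_M_n}, this will also transfer consistency to the finite forest.

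\textbf{Cell diameter shrinks (property (i)).} Let $A_n(\bX, \Theta)$ denote the leaf of $\bX$; since the unique representative lies in $A_n(\bX, \Theta)$, (i) reduces to showing $\mathrm{diam}(A_n(\bX, \Theta)) \to 0$ in $\mathds{P}\otimes\mathds{P}_{\Theta}$-probability. I would combine three ingredients. First, each quantile cut with $q_n \in [1-q, q]$ sends at most a fraction $q<1$ of the cell's current subsample to either side, so the count $N_k$ of subsampled points at depth $k$ along the path to $\bX$ satisfies $N_k \leq a_n q^k$; since the algorithm terminates at one point per leaf, the depth along this path is at least of order $\log(a_n)/\log(1/q)$ and tends to infinity. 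Second, the cut directions being i.i.d.\ uniform on $\{1, \ldots, d\}$, a Chernoff bound yields that, with probability $1-o(1)$, each coordinate is split at least of order $\log(a_n)/d$ times along that path. Third, under \textbf{(H3)} the marginal density of $\bX$ along any axis, restricted to an axis-aligned sub-box, is bounded away from $0$ and $\infty$; together with a Dvoretzky--Kiefer--Wolfowitz deviation bound this implies that each empirical $q_n$-quantile cut along coordinate $j$ of a cell containing sufficiently many subsampled points contracts the cell's $j$-side by a multiplicative factor bounded away from $1$. Propagating these contractions across the $\Omega(\log(a_n)/d)$ cuts along each coordinate yields $\mathrm{diam}(A_n(\bX, \Theta)) \to 0$, and hence (i).

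\textbf{Stone majoration (property (ii)) and main obstacle.} For fixed $\Theta$, $\sum_i W_{ni}^{\infty}(\bX, \Theta) g(\bX_i) = g(\bX_{I(\bX, \Theta)})$ where $I(\bX, \Theta)$ indexes the representative of $\bX$'s leaf. By the exchangeability of $(\bX_1, \ldots, \bX_n)$ and the symmetry of the quantile construction in these variables, combined with the two-sided density assumption in \textbf{(H3)}, the conditional law of $\bX_{I(\bX, \Theta)}$ given $\bX$ has a density bounded uniformly by a multiple of that of $\bX$; integrating against $g$ then gives (ii) with a constant depending only on $d, q, f_0, f_1$. The hardest step will be the third ingredient of (i): propagating the DKW bound uniformly along a tree whose depth grows with $a_n$, while cells close to the leaves contain only a handful of subsampled points and the empirical $q_n$-quantile becomes noisy. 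I expect the argument to proceed by truncating the quantitative contraction analysis at a cut-off depth at which the intermediate cells still carry enough subsample mass for DKW to be informative (using the density lower bound in \textbf{(H3)} to guarantee this), and handling the remaining deeper cuts via the smallness of the cell already attained at the cut-off.
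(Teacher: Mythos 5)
Your decomposition into Stone's conditions and your treatment of the cell diameter, the weight normalization $\sum_i W_{ni}^{\infty}(\bX)=1$, and the bound $\max_i W_{ni}^{\infty}(\bX) \le a_n/n$ all match the paper's proof. Your plan for the diameter also lands on the paper's key device: truncate the quantitative analysis at a fixed level $k_0$ where the subsample counts in each cell are still large enough for a DKW/Massart bound to be informative, and use monotonicity of the diameter in the depth to handle the remaining cuts. The paper organizes this slightly differently --- it first introduces a \emph{theoretical} $q$ quantile tree where each split contracts the cut side by a factor $\alpha=\frac{Cq}{(1-q)(c+C)}\in(0,1)$ uniformly over cells, shows $\E[V_{ik}]\le\E[\alpha^{K_{ik}}]\to0$ as $k\to\infty$, then couples the empirical tree to the theoretical one via a single DKW application per level --- which is cleaner than Chernoff-bounding the depth and the per-coordinate cut counts, but is the same idea.

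The genuine gap is your treatment of Stone's condition (i). You propose to show directly that the conditional law of the leaf representative $\bX_{I(\bX,\Theta)}$ given $\bX$ has a density dominated by a constant multiple of the density of $\bX$, appealing to exchangeability and the two-sided density bound. Exchangeability of $(\bX_1,\ldots,\bX_n)$ gives you symmetry in the \emph{labels}, not domination of the law of the selected point: the index $I(\bX,\Theta)$ is chosen adaptively through a deep data-dependent partition, and there is no cone-type geometric argument (as for $k$-NN) to control how the leaf representative concentrates. As stated, this is an unproved claim of roughly the same difficulty as the theorem itself, and I do not see how to make it rigorous along the lines you sketch. The paper avoids the issue entirely: under \textbf{(H3)} the regression function $m$ is continuous (hence uniformly continuous on $[0,1]^d$) and $\mathrm{Var}[Y\mid\bX]=\sigma^2$, and it invokes the remark in \citet{GyKoKrWa02} following Stone's theorem to the effect that condition (i) can be dropped in this setting; one then only needs (ii)--(v), which you have. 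You should replace your ``Stone majoration'' step by that observation; otherwise the proof does not close.
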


Some remarks are in order. At first, observe that each tree in the $q$ quantile forest is inconsistent \citep[see Problem $4.3$ in][]{GyKoKrWa02}, because each leaf contains exactly one data point, a number which does not grow to infinity as $n \to \infty$. Thus, Theorem \ref{theoreme_convergence_quantile_forest} shows that $q$ quantile forest combines inconsistent trees to form a consistent estimate. 

Secondly, many random forests can be seen as quantile forests if they satisfy the $X$-property and if splits do not separate a small fraction of data points from the rest of the sample. The last assumption is true, for example, if $\bX$ has a density on $[0,1]^d$ bounded from below and from above, and if some splitting rule forces splits to be performed far away from the cell edges. This assumption is explicitly made in the analysis of \citet{Me06} and \citet{Wa14} to ensure that
cell diameters tend to zero as $n \to \infty$, which is a necessary condition to prove the consistency of partitioning estimates \citep[see Chapter $4$ in][]{GyKoKrWa02}.


We note finally that Theorem \ref{theoreme_convergence_quantile_forest} does not cover the bootstrap case since in that case, $a_n=n$ data points are selected with replacement. However, the condition on the subsampling rate can be replaced by the following one: for all $\bx$,
\begin{align}
\max_i \Pt \left[ \bx  \overset{\Theta}{\leftrightarrow} \bX_i \right] \to 0 ~\textrm{as}~n \to \infty. \label{proportion}
\end{align}
Condition (\ref{proportion}) can be interpreted by saying that a point $\bx$ should not be connected too often to the same data point in the forest, thus meaning that trees have to be various enough to ensure the forest consistency. This idea of diversity among trees has already been suggested by \citet{Br01}.
In bootstrap case, a single data point is selected in about $64\%$ of trees. Thus, the term $\max_i \Pt \left[ \bx  \overset{\Theta}{\leftrightarrow} \bX_i \right]$ is roughly upper bounded by $0.64$ which is not sufficient to prove (\ref{proportion}). It does not mean that random forests based on bootstrap are inconsistent but that a more detailed analysis is required. A possible, but probably difficult, route is an in-depth analysis of the connection function $K_n(\bx, \bX_i) = \Pt \left[ \bx  \overset{\Theta}{\leftrightarrow} \bX_i \right]$.

\section{Proofs}

\subsection{Proof of Theorem \ref{almost_sure_convergence}}

We assume that $\mathcal{D}_n$ is fixed and prove Theorem \ref{almost_sure_convergence} for $d=2$. The general case can be treated similarly. Throughout the proof, we write, for all $\theta$, $\bx, \bz \in [0,1]^2$,
\begin{align*}
f_{\bx, \bz }(\theta) = \frac{\mathds{1}_{\bx  \overset{\theta}{\leftrightarrow} \bz}}{N_n(\bx,\theta)}.
\end{align*}

Let us first consider a discrete random forest. By definition of such random forests, there exists $p \in \mathds{N}^{\star}$ and a partition $ \{ A_i: 1 \leq i \leq p \}$ of $[0,1]^2$ such that the connection function $K_n$ is constant over the sets $A_i\times A_j$'s ($1\leq i,j\leq p$). For all $1 \leq i \leq p$, denote by $a_i$, the center of the cell $A_i$. 
Take $\bx, \bz \in \mathds{R}^2$. There exist $i,j$ such that $\bx \in A_i, \bz \in A_j$. Thus, for all $\theta$,
\begin{align*}
\left|\frac{\mathds{1}_{\bx  \overset{\theta}{\leftrightarrow} \bz}}{N_n(\bx,\theta)} - \frac{\mathds{1}_{{\bf a}_i  \overset{\theta}{\leftrightarrow} {\bf a}_j}}{N_n({\bf a}_i, \theta)} \right| 
\leq & \left|\frac{\mathds{1}_{\bx  \overset{\theta}{\leftrightarrow} \bz}}{N_n(\bx,\theta)} - \frac{\mathds{1}_{{\bf a}_i  \overset{\theta}{\leftrightarrow} \bz}}{N_n({\bf a}_i,\theta)}  + \frac{\mathds{1}_{{\bf a}_i  \overset{\theta}{\leftrightarrow} \bz}}{N_n({\bf a}_i, \theta)}  - \frac{\mathds{1}_{{\bf a}_i  \overset{\theta}{\leftrightarrow} {\bf a}_j}}{N_n({\bf a}_i, \theta)} \right| \\
\leq & \,\frac{1}{{N_n({\bf a}_i,\theta)}} \left|\mathds{1}_{\bx  \overset{\theta}{\leftrightarrow} \bz} - \mathds{1}_{{\bf a}_i  \overset{\theta}{\leftrightarrow} \bz} \right| \\
&  +  \frac{1}{N_n({\bf a}_i, \theta)} \left| \mathds{1}_{{\bf a}_i  \overset{\theta}{\leftrightarrow} \bz}  - \mathds{1}_{{\bf a}_i  \overset{\theta}{\leftrightarrow} {\bf a}_j} \right| \\
 \leq &  \,\frac{1}{{N_n({\bf a}_i,\theta)}} \mathds{1}_{\bx  \overset{\theta}{\nleftrightarrow} {\bf a}_i} +   \frac{1}{N_n({\bf a}_i, \theta)} \mathds{1}_{{\bf a}_j  \overset{\theta}{\nleftrightarrow} \bz}  \\
 \leq & ~ 0.
\end{align*}
Thus, the set 
\begin{align*}
\mathcal{H} = \left\lbrace \theta \mapsto f_{\bx, \bz }(\theta): \bx, \bz  \in [0,1]^2 \right\rbrace
\end{align*}
is finite. Therefore, by the strong law of large numbers, almost surely, for all $f \in \mathcal{H}$, 
\begin{align*}
\frac{1}{M} \sum_{m=1}^M f(\Theta_m) \underset{M \to \infty}{\to} \mathds{E}_{\Theta}\big[ f(\Theta) \big].
\end{align*}
Noticing that $W_{ni}^M(\bx) = \frac{1}{M} \sum_{m=1}^M f_{\bx, \bX_i}(\Theta_m)$, we obtain that, almost surely, for all $\bx \in [0,1]^2$,
\begin{align*}
 W_{ni}^M(\bx) \to W_{ni}^{\infty}(\bx), \quad \textrm{as}~M \to \infty.
\end{align*} 
Since $\mathcal{D}_n$ is fixed and random forest estimates are linear in the weights, the proof of the discrete case is complete. 

Let us now consider a continuous random forest. We define, for all ${\bf x},{\bf z} \in [0,1]^2$, 
\begin{align*}
W^{M}_n(\bx,\bz) = \frac{1}{M} \sum_{m=1}^M \frac{\mathds{1}_{\bx  \overset{\Theta_m}{\leftrightarrow} \bz}}{N_n(\bx,\Theta_m)},  
\end{align*}
and
\begin{align*}
W^{\infty}_n(\bx,\bz) = \mathds{E}_{\Theta}\left[ \frac{\mathds{1}_{\bx  \overset{\Theta}{\leftrightarrow} \bz}}{N_n(\bx,\Theta)} \right]. 
\end{align*}

According to the strong law of large numbers, almost surely, for all $\bx,\bz \in [0,1]^2\cap\mathbb{Q}^2$, 
\begin{align*}
\lim\limits_{M \to \infty} W^{M}_n(\bx,\bz) = W^{\infty}_n(\bx,\bz). 
\end{align*}
Set $\bx,\bz \in [0,1]^2$ where $\bx = (x^{(1)},x^{(2)})$ and $\bz = (z^{(1)},z^{(2)})$. Assume, without loss of generality, that  $x^{(1)} < z^{(1)}$ and $x^{(2)} < z^{(2)}$. Let 
\begin{align*}
& A_{\bx} = \{ {\bf u} \in [0,1]^2, u^{(1)} \leq x^{(1)} ~\textrm{and}~ u^{(2)} \leq x^{(2)} \}, \\
\textrm{and}~& A_{\bz} = \{ {\bf u} \in [0,1]^2, u^{(1)} \geq z^{(1)} ~\textrm{and}~ u^{(2)} \geq z^{(2)} \}.
\end{align*}
Choose $\bx_1 \in A_{\bx} \cap \mathbb{Q}^2$ (resp. $\bz_2 \in A_{\bz} \cap \mathbb{Q}^2$) and take $\bx_2 \in [0,1]^2\cap \mathbb{Q}^2$ (resp. $\bz_1 \in [0,1]^2\cap \mathbb{Q}^2$) such that $\bx_1, \bx, \bx_2$ (resp. $\bz_2, \bz, \bz_1$) are aligned in this order (see Figure \ref{figure_1b}).
\begin{figure}[h!]
\begin{center}
\includegraphics[scale=0.5]{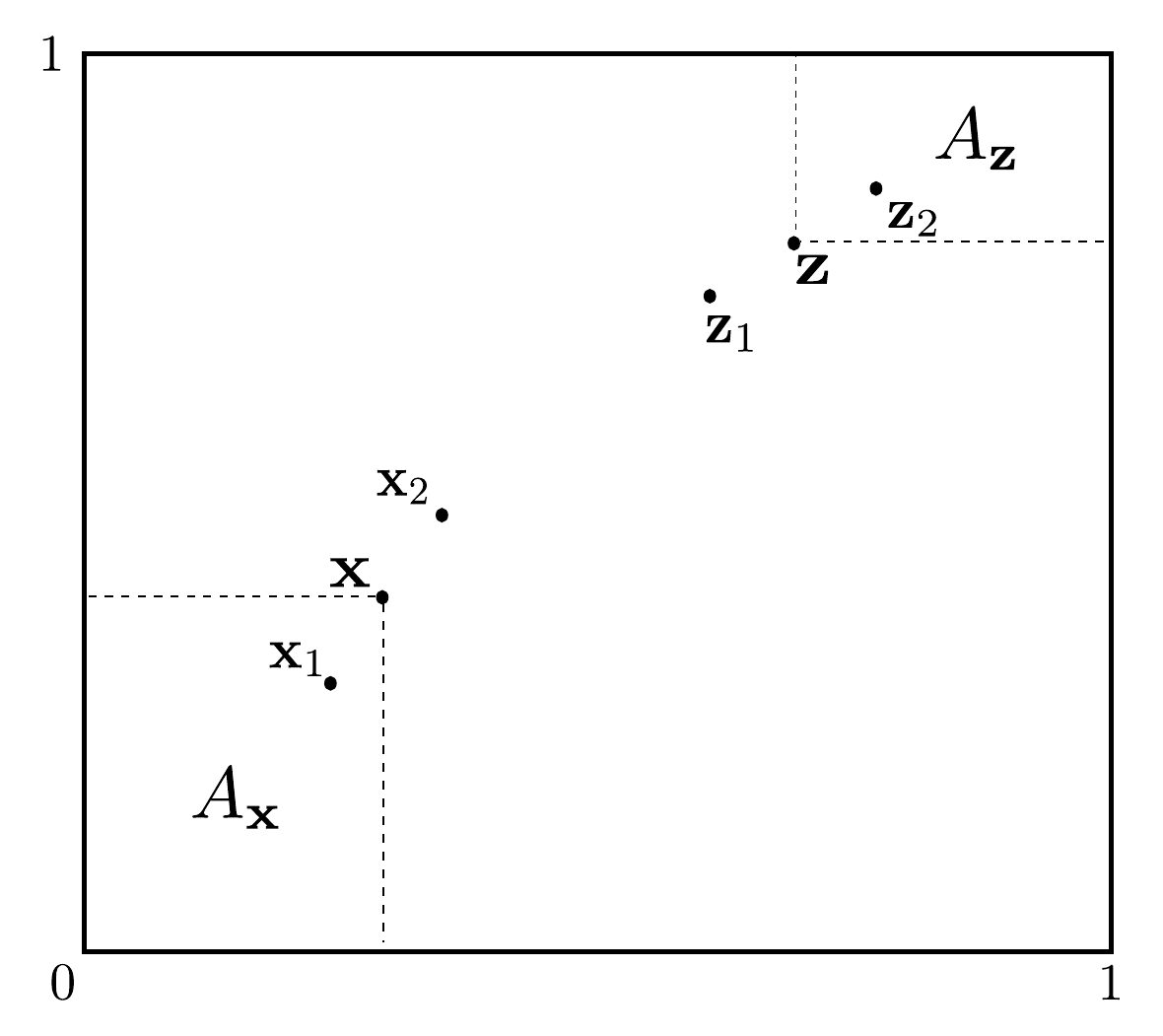}   
\end{center}
\caption{Respective positions of $\bx, \bx_1, \bx_2$ and $\bz, \bz_1, \bz_2$}
\label{figure_1b}
\end{figure}

Thus,
\begin{align}
\left| W^M_n(\bx,\bz) - W^{\infty}_n(\bx,\bz) \right| \leq & \left| W^M_n(\bx,\bz) - W^M_n(\bx_1, \bz_2) \right| \nonumber\\
& + \left| W^M_n(\bx_1,\bz_2)  - W^{\infty}_n(\bx_1,\bz_2) \right|  \nonumber \\
& + \left| W^{\infty}_n(\bx_1,\bz_2) - W^{\infty}_n(\bx,\bz) \right|. \label{lemmeforet3}
\end{align}
Set $\varepsilon > 0$. Because of the continuity of $K_n$, we can choose $\bx_1, \bx_2$ close enough to $\bx$ and $\bz_2, \bz_1$ close enough to $\bz$ such that,
\begin{align*}
 |K_n(\bx_2, \bx_1)-1|  & \leq \varepsilon,  \\
 |K_n(\bz_1, \bz_2)-1| & \leq \varepsilon,  \\
 |1 - K_n(\bx_1, \bx)| & \leq \varepsilon, \\
  |1 - K_n(\bz_2, \bz)| & \leq \varepsilon.  
\end{align*}
%
%
Let us consider the second term in equation (\ref{lemmeforet3}). Since $\bx_1, \bz_2$ belong to $[0,1]^2 \cap \mathbb{Q}^2$, almost surely, there exists $M_1>0$ such that, if $M > M_1$,
\begin{align*}
\left| W^M_n(\bx_1,\bz_2) - W^{\infty}_n(\bx_1,\bz_2) \right| \leq \varepsilon. 
\end{align*}

Considering the first term in (\ref{lemmeforet3}), we have
\begin{align*}
\left| W^M_n(\bx,\bz) - W^{\infty}_n(\bx_1,\bz_2) \right| \leq \frac{1}{M}\sum_{m=1}^M \left| \frac{\mathds{1}_{\bx \overset{\Theta_m}{\leftrightarrow} \bz}}{N_n(\bx,\Theta_m)} - \frac{\mathds{1}_{\bx_1 \overset{\Theta_m}{\leftrightarrow} \bz_2}}{N_n(\bx,\Theta_m)} \right|. 
\end{align*}
Observe that, given the positions of $\bx, \bx_1, \bz, \bz_2$, the only case where 
\begin{align*}
\left| \frac{\mathds{1}_{\bx \overset{\Theta_m}{\leftrightarrow} \bz}}{N_n(\bx,\Theta_m)} - \frac{\mathds{1}_{\bx_1 \overset{\Theta_m}{\leftrightarrow} \bz_2}}{N_n(\bx,\Theta_m)} \right| \neq 0 
\end{align*}
occurs when $\bx_1  \overset{\Theta_m}{\nleftrightarrow} \bz_2$ and $\bx  \overset{\Theta_m}{\leftrightarrow} \bz$.  Thus,
\begin{align*}
 & \frac{1}{M}\sum_{m=1}^M \left| \frac{\mathds{1}_{\bx \overset{\Theta_m}{\leftrightarrow} \bz}}{N_n(\bx,\Theta_m)} - \frac{\mathds{1}_{\bx_1 \overset{\Theta_m}{\leftrightarrow} \bz_2}}{N_n(\bx,\Theta_m)} \right| \\
 & \quad = \frac{1}{M}\sum_{m=1}^M \left| \frac{\mathds{1}_{\bx \overset{\Theta_m}{\leftrightarrow} \bz}}{N_n(\bx,\Theta_m)} - \frac{\mathds{1}_{\bx_1 \overset{\Theta_m}{\leftrightarrow} \bz_2}}{N_n(\bx,\Theta_m)} \right| \mathds{1}_{\bx_1  \overset{\Theta_m}{\nleftrightarrow} \bz_2} \mathds{1}_{\bx  \overset{\Theta_m}{\leftrightarrow} \bz} \\
& \quad \leq \frac{1}{M}\sum_{m=1}^M \mathds{1}_{\bx \overset{\Theta_m}{\leftrightarrow} \bz} \mathds{1}_{\bx_1  \overset{\Theta_m}{\nleftrightarrow} \bz_2}.
\end{align*}

Again, given the relative positions of $\bx, \bx_1, \bx_2, \bz, \bz_2,\bz_1$, we obtain  
\begin{align*}
 \frac{1}{M}\sum_{m=1}^M \mathds{1}_{\bx \overset{\Theta_m}{\leftrightarrow} \bz} \mathds{1}_{\bx_1  \overset{\Theta_m}{\nleftrightarrow} \bz_2} & \leq  \frac{1}{M}\sum_{m=1}^M \left( \mathds{1}_{\bx_1 \overset{\Theta_m}{\nleftrightarrow} \bx } + \mathds{1}_{\bz_2 \overset{\Theta_m}{\nleftrightarrow} \bz } \right)\\
&  \leq   \frac{1}{M}\sum_{m=1}^M \left( \mathds{1}_{\bx_1 \overset{\Theta_m}{\nleftrightarrow} \bx_2 } + \mathds{1}_{\bz_2 \overset{\Theta_m}{\nleftrightarrow} \bz_1 } \right) \\
&  \leq  \frac{1}{M}\sum_{m=1}^M \mathds{1}_{\bx_1 \overset{\Theta_m}{\nleftrightarrow} \bx_2 } + \frac{1}{M}\sum_{m=1}^M \mathds{1}_{\bz_2 \overset{\Theta_m}{\nleftrightarrow} \bz_1 } .
\end{align*}
Collecting the previous inequalities, we have
\begin{align*}
\left| W^M_n(\bx,\bz) - W^{\infty}_n(\bx_1,\bz_2) \right|  \leq & ~\frac{1}{M}\sum_{m=1}^M \mathds{1}_{\bx_1 \overset{\Theta_m}{\nleftrightarrow} \bx_2 } 
+  \frac{1}{M}\sum_{m=1}^M \mathds{1}_{\bz_2 \overset{\Theta_m}{\nleftrightarrow} \bz_1 } \\
\leq & ~ 2 - \frac{1}{M}\sum_{m=1}^M \mathds{1}_{\bx_1 \overset{\Theta_m}{\leftrightarrow} \bx_2 } 
 - \frac{1}{M}\sum_{m=1}^M \mathds{1}_{\bz_2 \overset{\Theta_m}{\leftrightarrow} \bz_1 }. 
\end{align*}
Since $\bx_2, \bz_1, \bx_1, \bz_2 \in [0,1]^2\cap \mathbb{Q}^2$, we deduce that there exists $M_2$ such that, for all $M > M_2$, 
\begin{align}
\left| W^M_n(\bx,\bz) - W^{\infty}_n(\bx_1,\bz_2) \right|  \leq 2 - K_{\infty}(\bx_2, \bx_1) - K_{\infty}(\bz_1, \bz_2) + 2 \varepsilon. \label{inequality1_th1}
\end{align}
Considering the third term in (\ref{lemmeforet3}), using the same arguments as above, we see that 
\begin{align}
| W^{\infty}_n(\bx_1, \bz_2) - W^{\infty}_n(\bx, \bz)| & \leq \mathds{E}_{\Theta} \left| \frac{\mathds{1}_{\bx_1  \overset{\Theta}{\leftrightarrow} \bz_2}}{N_n(\bx_1,\Theta)}  - \frac{\mathds{1}_{\bx  \overset{\Theta}{\leftrightarrow} \bz}}{N_n(\bx,\Theta)} \right|\nonumber\\
& \leq \mathds{E}_{\Theta} \left| \frac{\mathds{1}_{\bx_1  \overset{\Theta}{\leftrightarrow} \bz_2}}{N_n(\bx_1,\Theta)}  - \frac{\mathds{1}_{\bx  \overset{\Theta}{\leftrightarrow} \bz}}{N_n(\bx,\Theta)} \mathds{1}_{\bx_1  \overset{\Theta}{\nleftrightarrow} \bz_2} \mathds{1}_{\bx  \overset{\Theta}{\leftrightarrow} \bz} \right|\nonumber\\
& \leq \mathds{E}_{\Theta} \left[ \mathds{1}_{\bx_1  \overset{\Theta}{\nleftrightarrow} \bz_2} \mathds{1}_{\bx  \overset{\Theta}{\leftrightarrow} \bz} \right]\nonumber\\
& \leq \mathds{E}_{\Theta} \left[ \mathds{1}_{\bx_1 \overset{\Theta}{\nleftrightarrow} \bx_2 } 
+  \mathds{1}_{\bz_2 \overset{\Theta}{\nleftrightarrow} \bz_1 } \right] 
 \nonumber \\
& \leq 2-K_n(\bx_1, \bx_2) - K_n(\bz_2,\bz_1 ).\label{inequality2_th1}
\end{align}
Using inequalities (\ref{inequality1_th1}) and (\ref{inequality2_th1}) in (\ref{lemmeforet3}), we finally conclude that, for all $M > \max(M_1, M_2)$,  
\begin{align*}
 \left| W^{M}_n(\bx,\bz) - W^{\infty}_n(\bx,\bz) \right| & \leq 4 - 2K_{\infty}(\bx_2, \bx_1) - 2K_{\infty}(\bz_1, \bz_2) + 3 \varepsilon\\
 & \leq 7 \varepsilon.
\end{align*}
This completes the proof of Theorem \ref{almost_sure_convergence}.

\subsection{Proof of Lemma \ref{grid_step_uniform_forest} and Theorem \ref{theo_convergence_distribution_foret}}

\begin{proof}[Proof of Lemma \ref{grid_step_uniform_forest}]

Set $k \in \mathds{N}$ and $\varepsilon > 0$. We start by considering the case where $d=1$. Take  $x,z \in [0,1]$ and let $w = - \log\left( |x-z|\right)$. The probability that $x$ and $z$ are not connected in the uniform forest after $k$ cuts is given by
\begin{align*}
1 - K_k(x,z) & \leq 1 - K_k(0, |z-x|)\\
& \quad \quad \textrm{(according to Technical Lemma \ref{tech_lemma_1}, see the end of the section)}\\
& \leq  e^{-w} \mathds{1}_{k>0}\sum_{i=0}^{k-1} \frac{w^i}{i!}\\
& \quad \quad \textrm{(according to Technical Lemma \ref{lemme_connection_probability_uniform_forest_dim1}, see the end of the section)}\\
& \leq \frac{(k+2)!e}{w^{3}},
\end{align*}
for all $w>1$. Now, consider the multivariate case, and let $\bx, \bz \in [0,1]^d$. Set, for all $1 \leq j \leq d$, $w_j = - \log\left( |x_j-z_j|\right)$. By union bound, recalling that $1 - K_k(\bx,\bz) = \mathds{P}_{\Theta}(\bx \overset{\Theta}{\nleftrightarrow} \bz )$, we have
\begin{align*}
1 - K_k(\bx,\bz) & \leq  \sum_{j=1}^d  \left( 1 - K_k(x_j,z_j)\right) \\
& \leq \frac{d(k+2)!e}{ \min\limits_{1 \leq j \leq d}w_j^{3}}.
\end{align*}
Thus, if, for all $1 \leq j \leq d$, 
\begin{align*}
|x_j - z_j| \leq \exp\left( -\frac{(A_{k,d})^{1/3}}{\varepsilon^{2/3}}\right), 
\end{align*}
then
\begin{align*}
1 - K_k(\bx,\bz) \leq \frac{\varepsilon^2}{8},
\end{align*}
where $A_{k,d} = (8de(k+2)!)^{1/3}$. Consequently, 
\begin{align*}
\delta(\varepsilon) \geq \exp\left(-\frac{(A_{k,d})^{1/3}}{\varepsilon^{2/3}}\right).
\end{align*}

\end{proof}

\begin{proof}[Proof of Theorem \ref{theo_convergence_distribution_foret}]

We start the proof by proving that the class 
\begin{align*}
\mathcal{H} = \left\lbrace \theta \mapsto f_{\bx, \bz }(\theta): \bx, \bz  \in \mathds{R}^2 \right\rbrace
\end{align*}
is $\Pt$-Donsker, that is, there exists a Gaussian process $\mathds{G}$ such that
\begin{align*}
\sup\limits_{f \in \mathcal{H}} \big\lbrace \mathds{E}|f|\left(d\mathds{G}_M - d\mathds{G} \right) \big\rbrace \underset{M \to \infty}{\to} 0.
\end{align*}

At first, let us consider a finite random forest. As noticed in the proof of Theorem \ref{almost_sure_convergence}, the set $\mathcal{H}$ is finite. Consequently, by the central limit theorem, the set $\mathcal{H}$ is $\Pt$-Donsker.

Now, consider a random forest which satisfies the second statement in {\bf Assumption 1}. Set $\varepsilon > 0$. Consider a regular grid of $[0,1]^d$ with a step $\delta$ and let $\mathcal{G}_{\delta}$ be the set of nodes of this grid. We start by finding a condition on $\delta$ such that the set 
\begin{align*}
\tilde{\mathcal{G}}_{\delta} = \left\lbrace [f_{\bx_1, \bz_1},f_{\bx_2, \bz_2}]: \bx_1, \bx_2, \bz_1, \bz_2 \in \mathcal{G}_{\delta} \right\rbrace
\end{align*}
is a covering of $\varepsilon$-bracket of the set $\mathcal{H}$, that is, for all $f\in \mathcal{H}$, there exists $\bx_1, \bz_1, \bx_2, \bz_2 \in \mathcal{G}_{\delta}$ such that 
\begin{align}
f_{\bx_1, \bz_1} \leq f \leq f_{\bx_2, \bz_2} ~\textrm{and}~  \E^{1/2} \left[ f_{\bx_2, \bz_2}(\Theta) - f_{\bx_1, \bz_1}(\Theta) \right]^2 \leq \varepsilon. \label{condition_bracketing}
\end{align}

To this aim, set $\bx,\bz \in [0,1]^d$ and choose $\bx_1, \bx_2, \bz_1, \bz_2 \in \mathcal{G}_{\delta}$ (see Figure \ref{figure_proof2}). Note that, for all $\theta$, 
\begin{align*}
\frac{\mathds{1}_{\bx_1  \overset{\theta}{\leftrightarrow} \bz_2}}{N_n(\bx_1, \theta)} \leq \frac{\mathds{1}_{\bx  \overset{\theta}{\leftrightarrow} \bz}}{N_n(\bx, \theta)} \leq \frac{\mathds{1}_{\bx_2  \overset{\theta}{\leftrightarrow} \bz_1}}{N_n(\bx_2, \theta)},
\end{align*}
that is, $f_{\bx_1, \bz_2} \leq f_{\bx, \bz} \leq f_{\bx_2, \bz_1}$. To prove the second statement in (\ref{condition_bracketing}), observe that
\begin{align*}
\E^{1/2} \big[ f_{\bx_2, \bz_2}(\Theta) - f_{\bx_1, \bz_1}(\Theta) \big]^2 & = \mathds{E}_{\Theta}^{1/2} \left[ \frac{\mathds{1}_{\bx_1 \overset{\Theta}{\leftrightarrow} \bz_2}}{N_n(\bx_1, \Theta)} - \frac{\mathds{1}_{\bx_2  \overset{\Theta}{\leftrightarrow} \bz_1}}{N_n(\bx_2, \Theta)} \right]^2\\
& = \mathds{E}_{\Theta}^{1/2} \bigg[ \bigg( \frac{\mathds{1}_{\bx_1 \overset{\Theta}{\leftrightarrow} \bz_2}}{N_n(\bx_1, \Theta)} - \frac{\mathds{1}_{\bx_2  \overset{\Theta}{\leftrightarrow} \bz_1}}{N_n(\bx_2, \Theta)} \bigg) \\
& \qquad \qquad    \times \mathds{1}_{\bx_1  \overset{\Theta}{\nleftrightarrow} \bz_2} \mathds{1}_{\bx_2  \overset{\Theta}{\leftrightarrow} \bz_1} \bigg]^2\\
& \leq  \mathds{E}_{\Theta}^{1/2} \left[ \mathds{1}_{\bx_1 \overset{\Theta}{\nleftrightarrow} \bx_2 } 
+  \mathds{1}_{\bz_1 \overset{\Theta}{\nleftrightarrow} \bz_2 }  \right]^2\\
& \leq 2 \sqrt{ 1 - K_n(\bx_1, \bx_2) + 1 - K_n(\bz_1, \bz_2)}.
\end{align*}

\begin{figure}[h!]
\begin{center}
\includegraphics[scale=0.5]{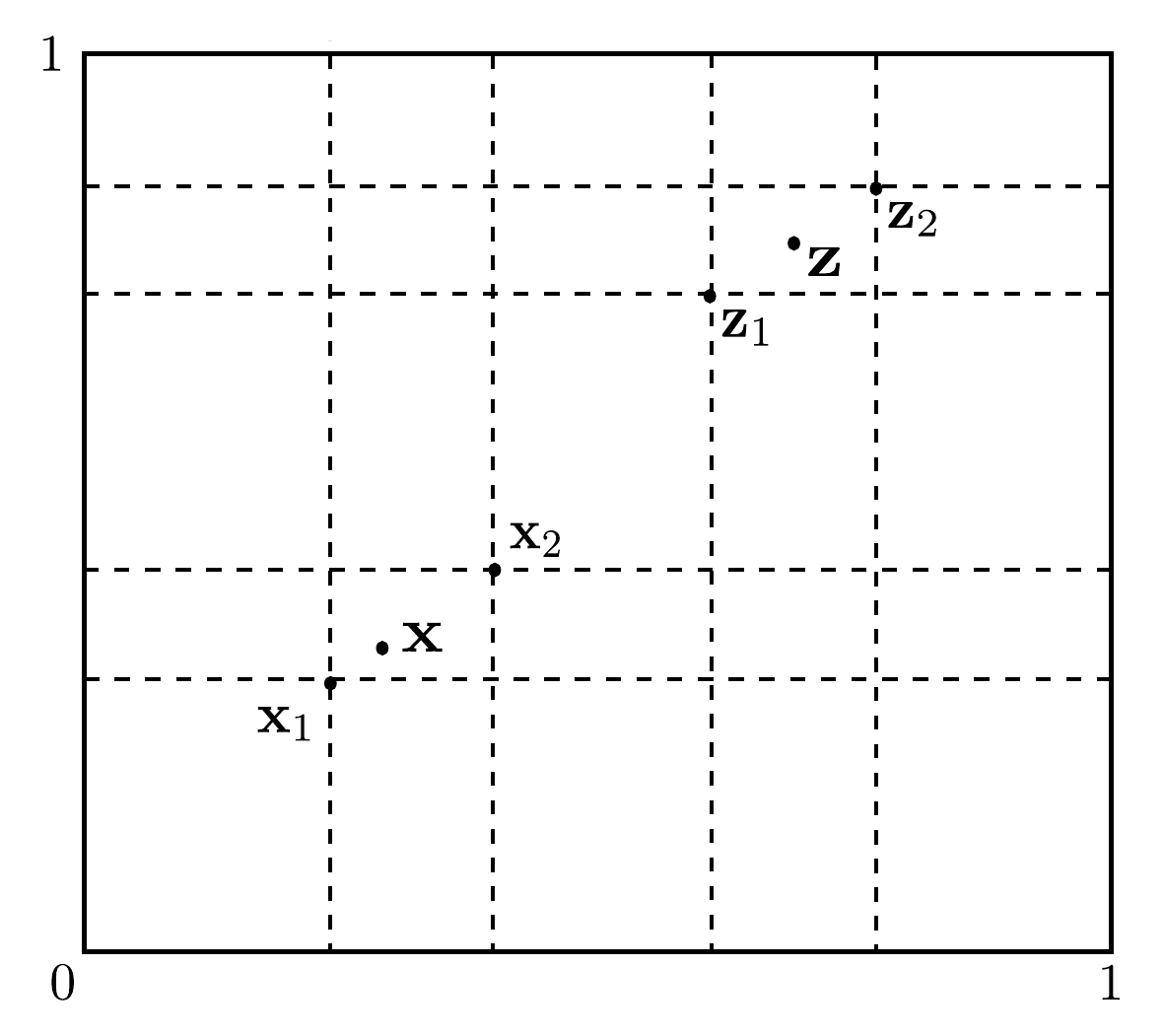}   
\end{center}
\caption{Respective positions of $\bx, \bx_1, \bx_2$ and $\bz, \bz_1, \bz_2$ with $d=2$.}
\label{figure_proof2}
\end{figure}

Thus, we have to choose the grid step $\delta$ such that 
\begin{align}
 \displaystyle \mathop{\sup_{\bx_1, \bx_2 \in [0,1]^d}}_{\|\bx_1 - \bx_2\|_{\infty} \leq \delta} \big|1 - K_n(\bx_1, \bx_2)\big| \leq \frac{\varepsilon^2}{8} .\label{inequality_proof1}
\end{align}
By {\bf Assumption 1} and the definition of the random forest grid step, there exist constants $C,A>0$ and $0<\alpha<2$ such that, for all $\varepsilon>0$, if 
\begin{align}
\delta \geq C \exp(- A/\varepsilon^{\alpha}), \label{inequality_delta}
\end{align}
then (\ref{inequality_proof1}) is satisfied. Hence, if $\delta$ satisfies (\ref{inequality_delta}), then $\tilde{\mathcal{G}}_{\delta}$ is a covering of $\varepsilon$-bracket of $\mathcal{H}$. In that case, the number $N_{[~]} (\varepsilon, \mathcal{F}, L_2(P))$ of $\varepsilon$-bracket needed to cover $\mathcal{H}$ satisfies 
\begin{align*}
N_{[~]} (\varepsilon, \mathcal{F}, L_2(P))  \leq \textrm{Card}(\tilde{\mathcal{G}}_{\delta}) \leq \textrm{Card}(\mathcal{G}_{\delta})^4 \leq  \left( \frac{1}{\delta}\right)^{4d}.
\end{align*}
Consequently, 
\begin{align*}
\sqrt{\log N_{[~]} (\varepsilon, \mathcal{F}, L_2(P))} \leq  \sqrt{\frac{2Ad}{\varepsilon^{\alpha} } - 2d \log C}
\end{align*}
where the last term is integrable near zero since $\alpha < 2$. Thus, according to Theorem $2.5.6$ in \citet{VaWe96} \citep[and the remark at the beginning of Section $2.5.2$ in][]{VaWe96} , the class $\mathcal{H}$ is $\mathds{P}_{\Theta}$-Donsker.

To conclude the proof, consider a random forest satisfying {\bf (H1)}. From above, we see that the class $\mathcal{H}$ is $\mathds{P}_{\Theta}$-Donsker. Recall that $\mathcal{F}_2 = \lbrace g_{\bx}: \theta \mapsto m_n(\bx, \theta): \bx \in$  $ [0,1]^d \rbrace$, where 
\begin{align*}
m_{n}(\bx, \Theta) =  \sum_{i=1}^n Y_i f_{\bx, \bX_i}(\Theta).
\end{align*}
Since the training set $\mathcal{D}_n$ is fixed, we have
\begin{align*}
& \sup\limits_{g_{\bx} \in \mathcal{F}_2} \big\lbrace \mathds{E}|g_{\bx}|\left(d\mathds{G}_M - d\mathds{G} \right) \big\rbrace \\
 & \quad =  \sup\limits_{\bx \in [0,1]^d} \bigg\lbrace \mathds{E}\Big|\sum_{i=1}^n Y_i f_{\bx, \bX_i}\Big|\left(d\mathds{G}_M - d\mathds{G} \right) \bigg\rbrace \\
 & \quad \leq \sum_{i=1}^n |Y_i| \sup\limits_{\bx \in [0,1]^d} \Big\lbrace \mathds{E}|f_{\bx, \bX_i}|\left(d\mathds{G}_M - d\mathds{G} \right) \Big\rbrace \\
 & \quad \leq \left( \sum_{i=1}^n |Y_i| \right) \sup\limits_{\bx, \bz \in [0,1]^d} \Big\lbrace \mathds{E}|f_{\bx, \bz}|\left(d\mathds{G}_M - d\mathds{G} \right) \Big\rbrace,
\end{align*}
which tends to zero as $M$ tends to infinity, since the class $\mathcal{H}$ is $\mathds{P}_{\Theta}$-Donsker.

Finally, note that Breiman's random forests are discrete, thus satisfying {\bf(H1)}. Uniform forests are continuous and satisfy {\bf (H1)} according to Lemma \ref{grid_step_uniform_forest}.

\end{proof}



\subsection{Proof of Theorem \ref{lemme_dependency_M_n}}

Observe that,
\begin{align*}
 & \Big(  m_{M,n}(\bX, \Theta_1, \hdots, \Theta_m)  - m(\bX) \Big)^2 \\
 & \quad =  \Big( m_{M,n}(\bX, \Theta_1, \hdots, \Theta_m) - \mathds{E}_{\Theta}\left[m_n(\bX, \Theta)\right] \Big)^2 + \Big( \mathds{E}_{\Theta} \left[m_n(\bX, \Theta)\right] - m(\bX) \Big)^2\\
  & \qquad + 2\Big( \mathds{E}_{\Theta}\left[m_n(\bX, \Theta)\right] - m(\bX) \Big)\Big( m_{M,n}(\bX, \Theta_1, \hdots, \Theta_m) - \mathds{E}_{\Theta}\left[m_n(\bX, \Theta)\right]\Big).
\end{align*}
Taking the expectation on both sides, we obtain
\begin{align*}
R(m_{M,n},m) & = R(m_{\infty,n},m) + \mathds{E} \Big[ m_{M,n}(\bX, \Theta_1, \hdots, \Theta_m) - \mathds{E}_{\Theta}\left[m_n(\bX, \Theta)\right] \Big]^2,
\end{align*}
by noticing that
\begin{align*}
& \mathds{E} \bigg[ \Big( m_{M,n}(\bX, \Theta_1, \hdots, \Theta_m) - \mathds{E}_{\Theta}\left[m_n(\bX, \Theta)\right]\Big) \Big( \mathds{E}_{\Theta}\left[m_n(\bX, \Theta)\right] - m(\bX) \Big) \bigg] \\
 & \quad = \mathds{E}_{\bX, \mathcal{D}_n} \Bigg[ \Big( \mathds{E}_{\Theta}\left[m_n(\bX, \Theta)\right] - m(\bX) \Big) \\
& \qquad \qquad \times \mathds{E}_{\Theta_1, \hdots, \Theta_M} \Big[ m_{M,n}(\bX, \Theta_1, \hdots, \Theta_m) - \mathds{E}_{\Theta}\big[m_n(\bX, \Theta)\big]\Big]\Bigg] \\
& \quad = 0,
\end{align*}
according to the definition of $m_{M,n}$. Fixing $\bX$ and $\mathcal{D}_n$, note that random variables $m_n(\bX,\Theta_1), \hdots, m_n(\bX,\Theta_1)$ are independent and identically distributed. Thus, we have
\begin{align*}
& \mathds{E} \left[ m_{M,n}(\bX, \Theta_1, \hdots, \Theta_m) - \mathds{E}_{\Theta}\left[m_n(\bX, \Theta)\right] \right]^2 \\
& = \mathds{E}_{\bX, \mathcal{D}_n} \mathds{E}_{\Theta_1, \hdots, \Theta_M} \left[ \frac{1}{M}\sum_{m=1}^M m_n(\bX, \Theta_m) - \mathds{E}_{\Theta}\left[m_n(\bX, \Theta)\right] \right]^2\\
&  = \frac{1}{M} \times \mathds{E} \Big[ \mathds{V}_{\Theta} \left[ m_n \left(\bX, \Theta \right) \right] \Big],
\end{align*}
which conludes the first part of the proof. Now, note that, 
\begin{align*}
R(m_{M,n}) - R(m_{\infty,n}) & = \frac{1}{M} \times \mathds{E} \Big[ \mathds{V}_{\Theta} \left[ m_n(\bX, \Theta) \right] \Big]\\
& = \frac{1}{M} \times \mathds{E} \left[ \mathds{V}_{\Theta} \left[ \sum_{i=1}^n W_{ni}(\bX, \Theta) (m(\bX_i) + \varepsilon_i) \right] \right]\\
& \leq \frac{1}{M} \times \left[ 8 \|m\|_{\infty}^2 + 2 \mathds{E} \left[ \mathds{V}_{\Theta} \left[ \sum_{i=1}^n W_{ni}(\bX, \Theta) \varepsilon_i \right] \right] \right] \\
& \leq \frac{1}{M} \times \left[ 8 \|m\|_{\infty}^2 + 2 \mathds{E} \left[ \max\limits_{1 \leq i \leq n } \varepsilon_i - \min\limits_{1 \leq j \leq n } \varepsilon_j \right]^2 \right] \\
& \leq \frac{1}{M} \times \left[ 8 \|m\|_{\infty}^2 + 8 \sigma^2 \mathds{E} \left[ \max\limits_{1 \leq i \leq n } \frac{\varepsilon_i}{\sigma} \right]^2 \right].
\end{align*}
The term inside the brackets is the maximum of $n$ $\chi^2$-squared distributed
random variables. Thus, for all $n \in \mathds{N}^{\star}$,
\begin{align*}
\E \left[ \max_{1\leq i \leq n} \varepsilon_i^2 \right] \leq 1 + 4 \log n,
\end{align*}
\citep[see, e.g., Chapter 1 in][]{BoLuMa13}. Therefore,
\begin{align*}
R(m_{M,n}) - R(m_{\infty,n}) & \leq \frac{8}{M} \times \big(  \|m\|_{\infty}^2 +  \sigma^2 (1 + 4 \log n) \big).
\end{align*}

\subsection{Proof of Theorem \ref{consistency_independent_forest} and Proposition \ref{proposition_consistency_uniform_cut}}

The proof of Theorem \ref{consistency_independent_forest} is based on Stone's theorem which is recalled here.

\begin{stone}

Assume that the following conditions are satisfied for every distribution of $\mathbf{{\bf X}}$:
\begin{enumerate}

\item[(i)] There is a constant $c$ such that for every non negative measurable function $f$ satisfying $\mathds{E} f(\mathbf{{\bf X}}) < \infty$ and any $n$,
\begin{align*}
\mathds{E}\left( \sum_{i=1}^n W_{ni}({\bf X}) f({\bf X}_i) \right) \leq c ~\mathbb{E} \left( f({\bf X}) \right).
\end{align*}

\item[(ii)] There is a $D>1$ such that, for all $n$, 
\begin{align*}
\mathds{P}\left( \sum_{i=1}^n W_{ni}({\bf X}) < D \right) = 1.
\end{align*}

\item[(iii)] For all $a > 0$, 
\begin{align*}
\lim\limits_{n \to \infty} \mathds{E}\left( \sum_{i=1}^n W_{ni}({\bf X}) \mathds{1}_{\|{\bf X} - {\bf X}_i\| > a} \right) = 0.
\end{align*}

\item[(iv)] The sum of weights satisfies
\begin{align*}
\sum_{i=1}^n W_{ni}({\bf X}) \underset{n \to \infty}{\to} 1  \quad \textrm{in probability}.
\end{align*}

\item[(v)] 
\begin{align*}
\lim\limits_{n \to \infty} \mathds{E}\left( \max_{1 \leq i \leq n} W_{ni}({\bf X}) \right) = 0.
\end{align*}
\end{enumerate}
Then the corresponding regression function estimate $m_n$ is universally $\mathds{L}^2$ consistent, that is, 
\begin{align*}
\lim\limits_{n \to \infty} \mathds{E} \left[ m_{\infty,n}(\bX) - m(\bX) \right]^2 = 0,
\end{align*}
for all distributions of $(\mathbf{{\bf X}}, Y)$ with $\mathds{E} Y^2 < \infty$.
\end{stone}

\begin{proof}[Proof of Theorem \ref{consistency_independent_forest}]

We check the assumptions of Stone's theorem. For every non negative measurable function $f$ satisfying $\mathds{E} f(\mathbf{{\bf X}}) < \infty$ and for any $n$, almost surely,
\begin{align*}
\mathds{E}_{\bX, \mathcal{D}_n}\left( \sum_{i=1}^n W_{ni}({\bf X}, \Theta) f({\bf X}_i) \right) \leq ~\mathbb{E}_{\bX} \left( f({\bf X}) \right), 
\end{align*}
where 
\begin{align*}
W_{ni}({\bf X}, \Theta)= \frac{\mathds{1}_{\bX_i \in A_n(\bX, \Theta)}}{N_n(\bX, \Theta)}
\end{align*}
are the weights of the random tree $\mathcal{T}_n(\Theta)$ \citep[see the proof of Theorem $4.2$ in][]{GyKoKrWa02}. Taking expectation with respect to $\Theta$ from both sides, we have
\begin{align*}
\mathds{E}_{\bX, \mathcal{D}_n}\left( \sum_{i=1}^n W_{ni}^{\infty}(\bX) f({\bf X}_i) \right) \leq ~ \mathbb{E}_{\bX} \left( f({\bf X}) \right), 
\end{align*}
which proves the first condition of Stone's theorem.

According to the definition of random forest weights $W_{ni}^{\infty}$, since $\sum_{i=1}^n W_{ni}({\bf X}, \Theta)$ $ \leq 1$ almost surely, we have 
\begin{align*}
\sum_{i=1}^n W_{ni}^{\infty}(\bX) = \mathds{E}_{\Theta} \left[ \sum_{i=1}^n W_{ni}({\bf X}, \Theta) \right] \leq 1.
\end{align*}

To check condition $(iii)$, note that, for all $a>0$, 
\begin{align*}
\mathds{E}\left[ \sum_{i=1}^n W_{ni}^{\infty}(\bX) \mathds{1}_{\|{\bf X} - {\bf X}_i\|_{\infty} > a} \right] = & \mathds{E}\left[ \sum_{i=1}^n \mathds{1}_{ \bX  \overset{\Theta}{\leftrightarrow} \bX_i}  \mathds{1}_{\|{\bf X} - {\bf X}_i\|_{\infty} > a} \right]\\
= & \mathds{E} \bigg[ \sum_{i=1}^n \mathds{1}_{\bX  \overset{\Theta}{\leftrightarrow} \bX_i} \mathds{1}_{\|{\bf X} - {\bf X}_i\|_{\infty} > a} \\
& \qquad \qquad  \times  \mathds{1}_{\textrm{diam}(A_n({\bf X},\Theta)) \geq a/2 }  \bigg],
\end{align*}
because $\mathds{1}_{\|{\bf X} - {\bf X}_i\|_{\infty} > a}  \mathds{1}_{\textrm{diam}(A_n({\bf X},\Theta)) < a/2 } = 0$. Thus, 
\begin{align*}
\mathds{E}\bigg[ \sum_{i=1}^n W_{ni}^{\infty}(\bX) \mathds{1}_{\|{\bf X} - {\bf X}_i\|_{\infty} > a} \bigg] & \leq 
\mathds{E}\bigg[ \mathds{1}_{\textrm{diam}(A_n({\bf X},\Theta)) \geq a/2 } \\
& \qquad \qquad \times \sum_{i=1}^n \mathds{1}_{\bX  \overset{\Theta}{\leftrightarrow} \bX_i} \mathds{1}_{\|{\bf X} - {\bf X}_i\|_{\infty} > a}   \bigg] \\
& \leq  ~\mathds{P} \Big[ \textrm{diam}(A_n({\bf X},\Theta)) \geq a/2 \Big],
\end{align*}
which tends to zero, as $n \to \infty$, by assumption.

To prove assumption $(iv)$, we follow the arguments developed by \cite{BiDeLu08}. For completeness, these arguments are recalled here. Let us consider the  partition associated with the random tree $\mathcal{T}_n(\Theta)$. By definition, this partition has $2^{k}$ cells, denoted by $A_1, \hdots, A_{2^{k}}$. For $1 \leq i \leq 2^{k}$, let $N_i$ be the number of points among $\bX, \bX_1, \hdots, \bX_n$ falling into $A_i$. Finally, set $\mathcal{S} = \{\bX, \bX_1, \hdots, \bX_n\}$. Since these points are independent and identically distributed, fixing the set $\mathcal{S}$ (but not the order of the points) and $\Theta$, the probability that $\bX$ falls in the $i$-th cell is $N_i/(n+1)$. Thus, for every fixed $t > 0$,
\begin{align*}
\mathds{P}\Big[ N_n(\bX, \Theta) < t  \Big] & = \mathds{E} \bigg[ \mathds{P} \Big[ N_n(\bX, \Theta) < t \Big| \mathcal{S}, \Theta \Big] \bigg]\\
 & = \mathds{E}\left[ \sum_{i: N_i < t+1} \frac{N_i}{n+1} \right]\\
 & \leq \frac{2^{k}}{n+1}t.
\end{align*}
Thus, by assumption, $N_n(\bX, \Theta) \to \infty$ in probability, as $n \to \infty$. Consequently, observe that
\begin{align*}
\sum_{i=1}^n W_{ni}^{\infty}(\bX) & = \E_{\T} \left[ \sum_{i=1}^n W_{ni}(\bX, \T) \right]\\
								& = \E_{\T} \Big[ \mathds{1}_{N_n(\bX,\T) \neq 0}  \Big] \\
								& = \mathds{P}_{\T} \left[ N_n(\bX,\T) \neq 0 \right]\\
								& \to 1 \quad \textrm{as}~n \to \infty.
\end{align*}

At last, to prove $(v)$, note that, 
\begin{align*}
\mathds{E} \left[ \max_{1 \leq i \leq n} W_{ni}^{\infty}({\bf X}) \right] & \leq  \mathds{E} \left[ \max_{1 \leq i \leq n}  \frac{\mathds{1}_{{\bf {\bf X}}_i \in A_n(\bX, \T)}}{N_n(\bX, \T)}  \right]\\
& \leq  \mathds{E} \left[ \frac{1}{N_n(\bX, \T)}  \right]\\
& \quad \to 0 \quad \textrm{as}~n \to \infty,
\end{align*}
since $N_n(\bX, \T) \to \infty$ in probability, as $n \to \infty$.
\end{proof}

\begin{proof}[Proof of Proposition \ref{proposition_consistency_uniform_cut}]
We check conditions of Theorem \ref{consistency_independent_forest}. Let us denote by $V_{nj}(\bX, \Theta)$ the length of the $j$-th side of the cell containing $\bX$ and $K_{nj}(\bX, \Theta)$ the number of times the cell containing $\bX$ is cut along the $j$-coordinate. Note that, if $U_1, \hdots, U_n$ are independent uniform on $[0,1]$, 
\begin{align*}
\mathds{E} \left[ V_{nj}(\bX, \Theta) \right] & \leq \mathds{E} \left[ \mathds{E} \left[ \prod_{l=1}^{K_{nj}(\bX, \Theta)} \max (U_i, 1 - U_i) | K_{nj}(\bX, \Theta) \right] \right]\\
& = \E \bigg[ \Big[ \E \big[ \max(U_1, 1 - U_1) \big] \Big]^{K_{nj}(\bX, \Theta)} \bigg]\\
& = \E \left[ \left(\frac{3}{4} \right)^{K_{nj}(\bX, \Theta)} \right].
\end{align*}
Since $K_{nj}(\bX, \Theta)$ is distributed as a binomial $\mathcal{B}(k_n, 1/d)$, $K_{nj}(\bX, \Theta) \to + \infty$ in probability, as $n$ tends to infinity. Thus $\mathds{E} \left[ V_{nj}(\bX, \Theta) \right] \to 0$ as $n \to \infty$.
\end{proof}

\subsection{Proof of Theorem \ref{theoreme_convergence_quantile_forest}}

To prove Theorem \ref{theoreme_convergence_quantile_forest}, we need the following lemma which states that the cell diameter of a quantile tree tends to zero.

\begin{lemme}
\label{diametre_arbre_quantile}
Assume that $\bX$ has a density $f$ over $[0,1]^d$, with respect to the Lebesgue measure and that there exist two constants $c, C > 0$ such that, for all $\bx \in [0,1]^d$,
\begin{align*}
c \leq f(\bx) \leq C.
\end{align*}
Thus, for all $q \in [1/2,1)$, the $q$ quantile tree defined in {\bf Algorithm 2} satisfies, for all $\gamma$,
\begin{align*}
\mathds{P}_{\bX, \Theta} \big[ \textrm{diam}(A_n({\bf X}, \Theta)) > \gamma \big] \underset{n \to \infty}{\to} 0.
\end{align*}
\end{lemme}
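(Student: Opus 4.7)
Denote by $V_{nj}(\bX, \Theta)$ the length of the $j$-th side of $A_n(\bX, \Theta)$. Since $\textrm{diam}(A_n(\bX, \Theta)) \le \sqrt{d}\,\max_{1 \le j \le d} V_{nj}(\bX, \Theta)$, it is enough to show that $V_{nj}(\bX, \Theta) \to 0$ in probability for each coordinate $j$. Let $D_n(\bX, \Theta)$ be the depth of the leaf containing $\bX$ and $K_{nj}(\bX, \Theta)$ the number of splits along coordinate $j$ on the root-to-leaf path, so $D_n = K_{n1} + \cdots + K_{nd}$.

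I would first argue that $D_n$ and each $K_{nj}$ tend to infinity in probability. Because every split at an empirical $q_n$-quantile with $q_n \in [1-q, q]$ assigns at most a fraction $q$ of the cell's points to either child, the number of points along any root-to-leaf path is divided by at least $1/q$ per split, so the stopping rule (one point per leaf, starting from $a_n$ subsampled points) forces $D_n \ge \log a_n / \log(1/q) \to \infty$. Conditional on $D_n$ and on the successive split positions, the splitting coordinates are i.i.d.\ uniform on $\{1, \dots, d\}$, so $K_{nj}$ is binomial with parameters $(D_n, 1/d)$ and therefore $K_{nj} \to \infty$ in probability.

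Next, I would establish a per-split shrinkage bound for the $j$-th side. For a cell $A$ with $j$-th side $[a_j, b_j]$ of length $L_j$, integrating $c \le f \le C$ over the other coordinates and dividing by $\mathds{P}(\bX \in A)$ shows that the marginal density of $X_j$ given $\bX \in A$ lies in $[c/(CL_j),\, C/(cL_j)]$. Hence the true $q_n$-quantile $s$ of this conditional distribution satisfies $\alpha := (s - a_j)/L_j \in [cq_n/C,\, \min(Cq_n/c,\, 1)]$, while $\bX$ falls in the left (resp.\ right) child with probability $q_n$ (resp.\ $1-q_n$). The expected logarithmic shrinkage of the $j$-th side under a single cut along coordinate $j$ is then
\begin{align*}
q_n \log \alpha + (1 - q_n)\log(1 - \alpha) \;\le\; -H(q_n) \;\le\; -H(q) \;<\; 0,
\end{align*}
the first inequality by maximizing the concave map $\alpha \mapsto q_n \log \alpha + (1 - q_n)\log(1-\alpha)$ at $\alpha = q_n$, and the second from the symmetry and concavity of the binary entropy $H$ together with $q_n \in [1-q, q]$ and $q < 1$.

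Summing this per-split bound over the $K_{nj}$ cuts in coordinate $j$ and applying Markov's inequality yields $\log V_{nj}(\bX, \Theta) \le -H(q)\, K_{nj}(\bX, \Theta)$ up to lower-order terms, whence $V_{nj}(\bX, \Theta) \to 0$ in probability and the reduction above concludes. The main obstacle is the passage from the true $q_n$-quantile used in the shrinkage bound to the empirical $q_n$-quantile actually used by the algorithm: deep in the tree, a cell may contain only $O(1)$ points and the empirical quantile can deviate substantially from the true one. I would handle this by freezing the analysis at depth $D_n - O(\log \log a_n)$, where cells still contain a growing number of points so that the empirical-versus-true quantile deviation is controllable via a DKW-type inequality, and dominating the residual $O(\log \log a_n)$ splits by the observation that each multiplies the side length by a factor at most $1$, which is negligible against the exponential decay already accumulated above that depth.
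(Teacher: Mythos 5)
Your overall strategy—show each side length goes to zero via per-split shrinkage, and separately control the empirical-versus-theoretical quantile gap—is the same as the paper's, but the execution diverges in two ways that create genuine problems.

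\textbf{The shrinkage step has a gap.} You derive an \emph{expected} logarithmic shrinkage $q_n \log \alpha + (1-q_n)\log(1-\alpha) \le -H(q) < 0$ per split, and then assert that Markov's inequality yields $\log V_{nj} \le -H(q)K_{nj}$ up to lower-order terms. Markov cannot do this: knowing $\mathds{E}[\log V_{nj}] \to -\infty$ gives no upper bound on $\P[V_{nj} > \gamma]$ (the upper tail of $-\log V_{nj}$ is controlled by Markov, not the lower tail you need), and Jensen runs the wrong way so it does not even give $\mathds{E}[V_{nj}] \to 0$. You would need a concentration inequality (Azuma, say), which requires the increments $\log(\text{shrinkage})$ to be bounded below. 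But your stated range $\alpha \in [cq_n/C,\, \min(Cq_n/c, 1)]$ does not bound $1-\alpha$ away from $0$, so the increment $\log(1-\alpha)$ is not bounded and the concentration step would not close. The missing ingredient is precisely what the paper proves: the density bounds $c \le f \le C$ force \emph{both} $\alpha$ and $1-\alpha$ to be bounded away from $0$ (integrating the density from the right as well as the left of the quantile gives $1-\alpha \ge c(1-q_n)/C$), so that $\max(\alpha, 1-\alpha) \le \beta$ for some $\beta < 1$ depending only on $c, C, q$. This is a \emph{deterministic} per-split contraction, so $V_{nj} \le \beta^{K_{nj}}$ pointwise, $\mathds{E}[V_{nj}] \le \mathds{E}[\beta^{K_{nj}}] \to 0$ by dominated convergence, and Markov applied to $V_{nj}$ itself (not its logarithm) finishes. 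The entropy machinery is unnecessary once you have the two-sided bound on $\alpha$, and without that bound it does not work.

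\textbf{The empirical-quantile control is overcomplicated and more delicate than needed.} You propose to freeze the analysis at depth $D_n - O(\log\log a_n)$ so that cells still contain enough points for DKW concentration. The paper instead observes that $\textrm{diam}(A_n(\bX,\Theta))$ is non-increasing in the level of the tree, so it suffices to show that the diameter of the cell at a \emph{fixed} level $k_0$ (chosen so the theoretical $q$-quantile tree already has small cells) goes to zero. It then couples the first $k_0$ empirical splits with the theoretical splits via Massart's inequality; all cells at depth $\le k_0$ contain order $(1-q)^{k_0} a_n \to \infty$ points, so the quantile deviations vanish uniformly. This avoids the data-dependent truncation depth entirely and sidesteps the bookkeeping near the leaves that your sketch defers. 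Your idea could plausibly be made rigorous, but the monotonicity-in-depth observation is what makes the argument clean, and it is absent from your sketch.
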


\begin{proof}[Proof of Lemma \ref{diametre_arbre_quantile}]

Set $q\in [1/2,1)$. At first, consider a theoretical $q$ quantile tree where cuts are made similarly as in the $q$ quantile tree but by selecting $q_n\in [1-q,q]$ and by performing the cut at the  $q_n$ theoretical quantile (instead of empirical one). The tree is then stopped at level $k$, where $k \in \mathds{N}$ is a parameter to be chosen later. 
Then, consider a cell $A = \prod_{j=1}^d [a_i, b_i]$ of the theoretical $q$ quantile tree. Assume that this cell is cut along the first coordinate and let $z$ be the split position. Thus, by definition of the theoretical $q$ quantile tree, there exists $q'\in [1-q, q]$ such that 
\begin{align*}
\int_{a_1}^z \int_{a_2}^{b_2} \hdots \int_{a_d}^{b_d}  f(\bx) \textrm{d}x_1  \hdots \textrm{d}x_d   = q' \int_{a_1}^{b_1} \hdots \int_{a_d}^{b_d}  f(\bx) \textrm{d}x_1 \hdots \textrm{d}x_d,
\end{align*}
that is
\begin{align}
(1 - q' )\int_{a_1}^z g(x_1) \textrm{d}x_1 = q' \int_z^{b_1} g(x_1) \textrm{d}x_1,\label{equation_proof_1}
\end{align}
where $g(x_1) = \int_{a_2}^{b_2} \hdots \int_{a_d}^{b_d}  f(\bx) \textrm{d}x_2 \hdots \textrm{d}x_d$. Letting $\mu_{d-1}(A) = \prod_{j=2}^d (b_j - a_j)$, by assumption, we have, for all $x_1\in [a_1, b_1]$, 
\begin{align*}
c \mu_{d-1}(A) \leq g(x_1) \leq C \mu_{d-1}(A).
\end{align*}
Hence, using (\ref{equation_proof_1}), we obtain
\begin{align*}
(1 - q') (z-a_1) c \mu_{d-1}(A) \leq  q' \int_z^{b_1} g(x_1) \textrm{d}x_1 \leq  q' (b_1-z) C \mu_{d-1}(A), 
\end{align*}
which leads to 
\begin{align}
\frac{z-a_1}{b_1-a_1} \leq \frac{q' C}{q' C + (1-q')c} \in ]0,1[. \label{arbre_quantile_proof_1}
\end{align}
Similarly,  
\begin{align*}
q' (b_1-z) c \mu_{d-1}(A) \leq  (1 - q' )\int_{a_1}^zg(x_1) \textrm{d}x_1\leq  (1-q') (z-a_1) C \mu_{d-1}(A),
\end{align*}
which yields
\begin{align}
\frac{b_1-z}{b_1-a_1} \leq \frac{C(1-q')}{q' c + (1-q')C} \in ]0,1[. \label{arbre_quantile_proof_2}
\end{align}
Combining (\ref{arbre_quantile_proof_1}) and (\ref{arbre_quantile_proof_2}), we deduce that 
\begin{align*}
\max\left( \frac{z-a_1}{b_1-a_1}, 1 - \frac{z-a_1}{b_1-a_1} \right) & \leq \max\left(\frac{q' C}{q' C + (1-q')c}, \frac{C(1-q')}{q' c + (1-q')C} \right)\\
& \leq \frac{Cq}{(1-q)(c+C)}.
\end{align*}
Consequently, letting 
\begin{align*}
\alpha = \frac{Cq}{(1-q)(c+C)} \in (0,1),
\end{align*}
the first dimension of $A$ is reduced at most by a factor $1-\alpha>0$ and at least by a factor $\alpha<1$.

Denote by $V_{ik}(\bX, \Theta)$ the length of the $i$-th side of the cell containing $\bX$ at level $k$ in the theoretical $q$ quantile tree, and let $K_{ik}(\bX, \Theta)$ the number of times this cell has been cut along the $i$-th coordinate. Hence, for all $i \in \{1,\hdots, d\}$,
\begin{align}
\E \left[ V_{ik}(\bX, \Theta) \right] \leq \E \big[\alpha^{K_{ik}(\bX, \Theta)} \big]  \underset{k \to \infty}{\to} 0, \label{eq_diam_0_arbre_quantile_theo}
\end{align}
which proves that the cell diameter of the theoretical $q$ quantile tree tends to zero, as the level $k$ tends to infinity.

Now, consider the empirical $q$ quantile tree as defined in {\bf Algorithm 2} but stopped at level $k$. Thus, for $n$ large enough, at each step of the algorithm, $q_n$ is selected in $[1-q,q]$. Set $\varepsilon, \eta > 0$ and let $G_k(\bX, \Theta)$ be the event where all $k$ cuts used to build the cell $A_k(\bX, \Theta)$ are distant of less than $\eta$ from cuts used to build the cell $A^{\star}_k(\bX, \Theta)$ of the theoretical $q$ quantile tree. Thus, 
\begin{align}
\E \big[ \textrm{diam}(A_k(\bX, \T)) \big] & = \E \big[ \textrm{diam}(A_k(\bX, \T)\mathds{1}_{G_k(\bX, \Theta)}) \big] \nonumber \\
& \quad  + \E \big[ \textrm{diam}(A_k(\bX, \T))\mathds{1}_{G_k(\bX, \Theta)^c}) \big] \nonumber \\
& \leq \E \big[ \textrm{diam}(A_k(\bX, \T)\mathds{1}_{G_k(\bX, \Theta)}) \big] + \P \big[ G_k(\bX, \Theta)^c \big]. \label{final_proof_1}
\end{align}
From equation (\ref{eq_diam_0_arbre_quantile_theo}), there exists $k_0 \in \mathds{N}^{\star}$ such that 
\begin{align*}
\mathds{E} \big[ \textrm{diam}(A^{\star}_{k_0}({\bf X}, \Theta)) \big] < \varepsilon.
\end{align*}
Since, on the event $G_k(\bX, \Theta)$, the $k$ consecutive cuts used to build $A^{\star}_k(\bX, \Theta)$ are distant of less than $\eta$ from the $k$ cuts used to design $A_k(\bX, \Theta)$, we have 
\begin{align}
\mathds{E} \big[\textrm{diam}(A_{k_0}({\bf X}, \Theta))\big] < k_0\eta + \varepsilon. \label{final_proof_2}
\end{align}

With respect to the second term in equation (\ref{final_proof_1}),  consider a cell $A$ of the empirical $q$ quantile tree. Without loss of generality, we assume that the next split is performed along the first coordinate. Let $F^A$ (resp. $F_{n}^A$) be the one dimensional conditional  distribution function (resp. empirical distribution function) of $\bX$ given that $\bX \in A$. Denote by $z^A_{n}$ the position of the empirical split performed in $A$. Since $\bX$ is uniformly distributed over $[0,1]^d$, $(F^A)'\geq c/ \mu(A)$. Thus, since $F^A$ is an increasing function, if $\sup\limits_{x\in A} |F^A(x) - F^A_{n}(x) | \leq c \eta/ \mu(A)$ then 
\begin{align*}
\inf\limits_{z \in \mathcal{Z}^A} |z^A_{n} - z| \leq \eta,
\end{align*}
where $\mathcal{Z}^A = \{ z, F^A(z) \in [1-q, q] \}$. Recall that $A_1(\bX, \T),\hdots, A_k(\bX, \T)$ are the consecutive cells containing $\bX$ designed with $\Theta$. Observe that, conditionally on the position of the split, data on the left side of the split are still independent and identically distributed according to Proposition $2.1$ in \citet{BiCeGu12}. Thus, we have 
\begin{align*}
\P \big[ G_k(\bX, \Theta)^c \big] 
& \leq \sum_{\ell = 1}^k \P \bigg[ \inf\limits_{z \in \mathcal{Z}^{A_{\ell}(\bX, \T)}} |z^{A_{\ell}(\bX, \T)}_{n} - z^{A_{\ell}(\bX, \T)}| > \eta  \bigg]\\
& \leq 	\sum_{\ell = 1}^k  \mathds{E} \left[ \P \bigg[ \inf\limits_{z \in \mathcal{Z}^{A_{\ell}(\bX, \T)}} |z^{A_{\ell}(\bX, \T)}_{n} - z^{A_{\ell}(\bX, \T)}| > \eta \bigg| N(A_{\ell}(\bX, \T)) \bigg] \right]\\
& \leq 	\sum_{\ell = 1}^k  \mathds{E} \bigg[ \P \bigg[ \sup\limits_{x} |F(x) - F_{n}(x) | \geq \frac{c \eta}{\mu(A)}  \bigg| N(A_{\ell}(\bX, \T)),\\
& \qquad \qquad  \qquad \qquad A_{\ell}(\bX, \T) \bigg] \bigg].
\end{align*}
Consequently, 
\begin{align*}
\P \big[ G_k(\bX, \Theta)^c \big]  
& \leq 2 \sum_{\ell = 1}^k \mathds{E} \left[ \exp\left(-\frac{2N(A_{\ell}(\bX, \T))c^2 \eta^2}{\mu(A_{\ell}(\bX, \T))^2}\right)\right]\\
& \qquad \textrm{\citep[see][]{Ma90}} \\
& \leq 2 k \exp\Big(-2 c^2 \eta^2 \big((1-q)^k n - \frac{1}{1-q}\big) \Big) \\
& \qquad \Big(\textrm{since}~ \min_{\ell} N(A_{\ell}(\bX, \T)) \geq (1-q)^k n - \frac{1}{1-q}\Big).
\end{align*}
Thus, for all $k$ and for all $n>\log k/(2 c^2 \eta^2 (1-q)^k)$, we obtain 
\begin{align}
\P \big[ G_k(\bX, \Theta)^c \big] 
& \leq \eta. \label{final_proof_3}
\end{align}
Gathering (\ref{final_proof_1}), (\ref{final_proof_2}) and (\ref{final_proof_3}), we conclude that, for all $n>\log k_0/(2 c^2 \eta^2 (1-q)^{k_0})$, 
\begin{align*}
\E \big[ \textrm{diam}(A_k(\bX, \T)) \big] \leq (k_0+1) \eta + \varepsilon.
\end{align*}
Since the diameter is a non increasing function of the level of the tree, the cell diameter of the fully developed tree (which contain exactly one point in each leaf) is lower than that of the tree stopped at $k_0$. Letting $A(\bX, \Theta)$ the cell of the (fully developed) empirical $q$ quantile tree (defined in {\bf Algorithm 2}), we have 
\begin{align*}
\E \big[ \textrm{diam}(A_n(\bX, \Theta)) \big] & \leq  \E \big[ \textrm{diam}(A_k(\bX, \T)) \big] \\
									 & \leq  (k_0+1) \eta + \varepsilon,
\end{align*}
which concludes the proof, since $\varepsilon$ and $\eta$ can be made arbitrarily small.

\end{proof}

\begin{proof}[Proof of Theorem \ref{theoreme_convergence_quantile_forest}]
We check the conditions of Stone's theorem. 
Condition $(i)$ is satisfied since the regression function is uniformly continuous and $\textrm{Var}[Y|\bX] = \sigma^2$ \citep[see remark after Stone theorem in][]{GyKoKrWa02}.

Condition $(ii)$ is always satisfied for random trees. Condition $(iii)$ is verified since 
\begin{align*}
\mathds{P}_{\bX, \Theta} \left[ \textrm{diam}(A_n({\bf X}, \Theta)) > \gamma \right] \underset{n \to \infty}{\to} 0,
\end{align*}
according to Lemma \ref{diametre_arbre_quantile}.

Since each cell contains exactly one data point, 
\begin{align*}
    \sum_{i=1}^n W_{ni}(x) = & \sum_{i=1}^n \mathds{E}_{\Theta}\left[ \frac{\mathds{1}_{{\bf {\bf X}}_i \in A_n(\bX, \T)}}{N_n(\bX, \T)}  \right] \\
= & \mathds{E}_{\Theta} \left[ \frac{1}{N_n(\bX, \T)} \sum_{i=1}^n \mathds{1}_{{\bf {\bf X}}_i \in A_n(\bX, \T)}\right]\\
= & 1.
\end{align*}
Thus, conditions $(iv)$ of Stone theorem is satisfied.

To check $(v)$, observe that in the subsampling step, there are exactly $\binom{a_n-1}{n-1}$ choices to pick a fixed observation $\bX_i$. Since $\bx$ and $\bX_i$ belong to the same cell only if $\bX_i$ is selected in the subsampling step, we see that
\begin{align*}
 \mathds{P}_{\Theta}\left[ \bX  \overset{\Theta}{\leftrightarrow} \bX_i \right] 
\leq & \frac{\binom{a_n-1}{n-1}}{\binom{a_n}{n}} = \frac{a_n}{n}.
\end{align*} 
So, 
\begin{align*}
\mathds{E}\left[\max\limits_{1 \leq i \leq n} W_{ni}(\bX)\right] & \leq  \mathds{E}\left[ \max_{1 \leq i \leq n } \mathds{P}_{\Theta} \left[ \bX  \overset{\Theta}{\leftrightarrow} \bX_i \right] \right] \leq \frac{a_n}{n}, 
\end{align*}
which tends to zero by assumption.
\end{proof}

\subsection{Proofs of Technical Lemmas \ref{lemme_connection_probability_uniform_forest_dim1} and \ref{tech_lemma_1}}

\begin{techlemme}\label{tech_lemma_1}
Take $k \in \mathds{N}$ and consider a uniform random forest where each tree is stopped at level $k$. For all $\bx, \bz \in [0,1]^d$, its connection function satisfies
\begin{align*}
K_k(0,|\bx-\bz|) \leq K_k(\bx, \bz),
\end{align*}
where $|\bx-\bz| = (|x_1-z_1|, \hdots, |x_d-z_d|)$.
\end{techlemme}

\begin{proof}
Take $x, z \in [0,1]$. Without loss of generality, one can assume that $x<z$ and let $\mu = z-x$. Consider the following two configurations.
\begin{figure}[h!]
\begin{center}
\includegraphics[scale=1]{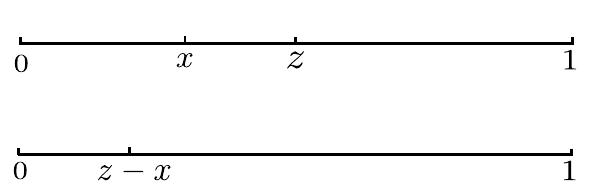}
\caption{Scheme of configuration $1$ (at the top) and $2$ (at the bottom).}
\end{center}
\end{figure}

For any $k \in \mathds{N}^{\star}$, we let ${\bf d}_k = (d_1, \hdots, d_k)$ (resp. ${\bf d}'_k = (d'_1, \hdots, d'_k)$) be $k$ consecutive cuts in configuration $1$ (resp. in configuration $2$). We denote by $\mathcal{A}_k$ (resp. $\mathcal{A}'_k$) the set where ${\bf d}_k$ (resp. ${\bf d}'_k$) belong.

We show that for all $ k \in \mathds{N}^ {\star}$, there exists a coupling between $\mathcal{A}_k$ and $\mathcal{A}'_k$ satisfying the following property: any $k$-tuple ${\bf d}_k$ is associated with a $k$-tuple ${\bf d}'_k$ such that 
\begin{enumerate}
\item if ${\bf d}_k$ separates $[x,z]$ then ${\bf d}'_k$ separates $[0,z-x]$,
\item if ${\bf d}_k$ does not separate $[x,z]$ and ${\bf d}'_k$ does not separate $[0,z-x]$, then the length of the cell built with ${\bf d}_k$ is higher than the one built with ${\bf d}'_k$.
\end{enumerate}
We call $\mathcal{H}_k$ this property. We now proceed by induction. For $k=1$, we use the function $g$ to map $\mathcal{A}_1$ into $\mathcal{A}'_1$ such that:
\begin{align*}
g_1(u) = \left\lbrace
\begin{array}{ll}
u & \textrm{if}~ u >z  \\
z-u & \textrm{if}~ u \leq z
\end{array}
\right.
\end{align*}
Thus, for any $d_1 \in \mathcal{A}_1$, if $d_1$ separates $[x,z]$, then $d_1' = g_1(d_1)$ separates $[0,z-x]$. Besides, the length of the cell containing $[x,z]$ designed with the cut $d_1$ is higher than that of the cell containing $[0,z-x]$ designed with the cut $d_1'$. Consequently, $\mathcal{H}_1$ is true.

Now, take $k>1$ and assume that $\mathcal{H}_k$ is true. Consequently, if ${\bf d}_k$ separates $[x,z]$ then $g_k({\bf d}_k)$ separates $[0, z-x]$. In that case, ${\bf d}_{k+1}$ separates $[x,z]$ and $g_{k+1}({\bf d}_{k+1})$ separates $[0, z-x]$. 
Thus, in the rest of the proof, we assume that ${\bf d}_k$ does not separate $[x,z]$ and $g_k({\bf d}_k)$ does not separate $[0,z-x]$. 
Let $[a_k, b_k]$ be the cell containing $[x,z]$ built with cuts ${\bf d}_k$. Since the problem is invariant by translation, we assume, without loss of generality, that $[a_k,b_k] = [0,\delta_k]$, where $\delta_k = b_k - a_k$ and $[x,z]= [x_k, x_k + \mu]$ (see Figure \ref{last_figure7}).
\begin{figure}[h!]
\begin{center}
\includegraphics[scale=1]{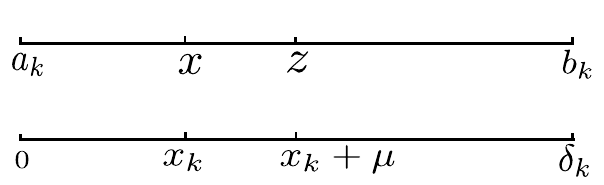}
\caption{Configuration 1a (at the top) and 1b (at the bottom).}
\label{last_figure7}
\end{center}
\end{figure}

In addition, according to $\mathcal{H}_k$, the length of the cell built with ${\bf d}_k$ is higher than the one built with ${\bf d}'_k$. Thus, one can find $\lambda \in (0,1)$ such that $d_k' = \lambda \delta_k$. This is summarized in Figure \ref{last_figure}.
\begin{figure}[h!]
\begin{center}
\includegraphics[scale=1]{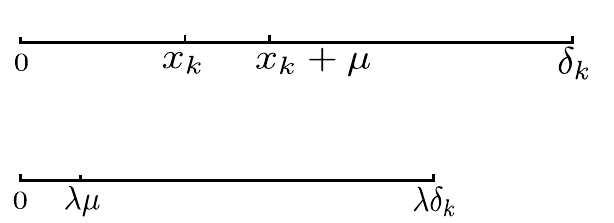}
\caption{Configuration 1b (at the top) and 2b (at the bottom).}
\label{last_figure}
\end{center}
\end{figure}

Thus, one can map $[0, \delta_k]$ into $[0, \lambda \delta_k]$ with $g_{k+1}$ defined as
\begin{align*}
g_{k+1}(u) = \left\lbrace
\begin{array}{ll}
\lambda u & \textrm{if}~ u >x_k + \mu  \\
\lambda (x + \mu - u) & \textrm{if}~ u \leq x_k + \mu
\end{array}
\right.
\end{align*}
Note that, for all $d_{k+1}$, the length of the cell containing $[x_k, x_k+\mu]$ designed with the cut $d_{k+1}$ (configuration 1b) is bigger than the length of the cell containing $[0,\mu]$ designed with the cut $d_{k+1}'=g_{k+1}(d_{k+1})$ (configuration 2b). Besides, if $d_{k+1}\in [x_k, x_k+\mu]$ then $g_{k+1}(d_{k+1}) \in [0, \mu]$. Consequently, the set of functions $g_1, \hdots, g_{k+1}$ induce a mapping of $\mathcal{A}_{k+1}$ into $\mathcal{A}'_{k+1}$ such that $\mathcal{H}_{k+1}$ holds. Thus, Technical Lemma \ref{tech_lemma_1} holds for $d=1$. 

To address the case where $d>1$, note that 
\begin{align*}
K_{k}(\bx,\bz) & = \sum\limits_{\substack{k_{1},\hdots,k_{d} \\ \sum_{j=1}^d k_{j} = k }} \frac{k!}{k_{1}! \hdots k_{d} !} \left( \frac{1}{d}\right)^k  \prod_{m=1}^d K_{k_m}(x_m, z_m)\\
& \geq \sum\limits_{\substack{k_{1},\hdots,k_{d} \\ \sum_{j=1}^d k_{j} = k }} \frac{k!}{k_{1}! \hdots k_{d} !}  \left( \frac{1}{d}\right)^k \prod_{m=1}^d  K_{k_m}(0, |z_m-x_m|)\\
& \geq K_k(0,|\bz - \bx|),
\end{align*}
which concludes the proof.

\end{proof}

\begin{techlemme} \label{lemme_connection_probability_uniform_forest_dim1}
Take $k \in \mathds{N}$ and consider a uniform random forest where each tree is stopped at level $k$. For all $x \in [0,1]$, its connection function $K_k(0,x)$ satisfies 
\begin{align*}
K_k(0,x) & = 1 - x \sum_{j=0}^{k-1} \frac{(- \ln x)^j}{j!},
\end{align*}
with the notational convention that the last sum is zero if $k=0$.
\end{techlemme}

\begin{proof}[Proof of Technical Lemma \ref{lemme_connection_probability_uniform_forest_dim1}]

The result is clear for $k=0$. Thus, set $k \in \mathds{N}^{\star}$ and consider a uniform random forest where each tree is stopped at level $k$. Since the result is clear for $x=0$, take $x \in ]0,1]$ and let $I = [0,x]$.  Thus
\begin{align*}
K_k(0,x) & = \P\left[ 0 \underset{k ~\textrm{cuts}}{\overset{\T}{\leftrightarrow}} x\right]  \\
& = \int_{z_1 \notin I} \int_{z_2 \notin I} \hdots \int_{z_k \notin I} p(\textrm{d}z_k | z_{k-1}) p(\textrm{d}z_{k-1} | z_{k-2}) \hdots p(\textrm{d}z_2|z_1) p(\textrm{d}z_1),
\end{align*}
where $z_1, \hdots, z_k$ are the positions of the $k$ cuts (see Figure \ref{figure_lemme_connection_uniforme}).

\vspace{0.5cm}
\begin{figure}[h!]
\begin{center}
\begin{tabular}{c}
\includegraphics[scale=0.6]{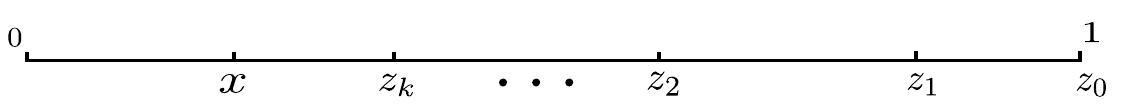} 
\end{tabular}
\end{center}
\caption{Positions of cuts $z_1, \hdots, z_k$ and $x$ with $d=1$}
\label{figure_lemme_connection_uniforme}
\end{figure}
\vspace{0.5cm}

We prove by induction that, for every integer $p$,
\begin{align*}
\int_{z_{k-p} \notin I} \hdots  \int_{z_k \notin I}& p(\textrm{d}z_k | z_{k-1}) \hdots p(\textrm{d}z_{k-p}|z_{k-p-1}) \\
& = 1 - \frac{x}{z_{k-p-1}} \left( \sum_{j=0}^{p} \frac{\left[\ln(z_{k-p-1}/x)\right]^j}{j!} \right).
\end{align*}
Denote by $\mathcal{H}_p$ this property. Since, given $z_{k-1}$, $z_k$ is uniformly distributed over $[0, z_{k-1}]$, we have
\begin{align*}
\int_{z_k \notin I} p(\textrm{d}z_k | z_{k-1}) & = 1 - \frac{x}{z_{k-1}}. 
\end{align*}
Thus $\mathcal{H}_0$ is true. Now, fix $p>0$ and assume that $\mathcal{H}_p$ is true. Let $u = z_{k-p-1}/x$. Thus, integrating both sides of $\mathcal{H}_p$, we deduce,
\begin{align*}
& \int_{z_{k-p-1}\notin I} \int_{z_{k-p} \notin I} \hdots \int_{z_k \notin I} p(\textrm{d}z_k | z_{k-1}) \hdots p(\textrm{d}z_{k-p}|z_{k-p-1}) p(\textrm{d}z_{k-p-1}|z_{k-p-2}) \\
= & \int_{z_{k-p-1}\notin I} \left[1 - \frac{x}{z_{k-p-1}} \left( \sum_{j=0}^{p} \frac{\left[\ln(z_{k-p-1}/x)\right]^j}{j!} \right) \right] p(\textrm{d}z_{k-p-1}|z_{k-p-2}) \\
= &  \int_{x}^{z_{k-p-2}} \left[ 1 - \frac{x}{z_{k-p-1}} \left( \sum_{j=0}^{p} \frac{\left[\ln(z_{k-p-1}/x)\right]^j}{j!} \right) \right] \frac{\textrm{d}z_{k-p-1}}{z_{k-p-2}} \\
= & \frac{x}{z_{k-p-2}} \int_{1}^{z_{k-p-2}/x} \left[ 1 - \frac{1}{u} \left( \sum_{j=0}^{p} \frac{\left[\ln(u)\right]^j}{j!} \right) \right]  \textrm{d}u.
\end{align*}
Using integration by parts on the last term, we conclude that $\mathcal{H}_{p+1}$ is true. Thus, for all $p>0$, $\mathcal{H}_p$ is verified. Finally, using  $\mathcal{H}_{k-1}$ and the fact that $z_0 = 1$, we conclude the proof.
\end{proof}

\bibliography{biblio-sbv}

\end{document}